\newcommand{\ind}{1\hspace{-.27em}\mbox{\rm l}}
\newcommand{\lqn}[1]{\noalign{\noindent $\displaystyle{#1}$}}\newcommand{\dd}{\mathrm{d}}
\numberwithin{equation}{section}
\begin{document}
\title{\centerline{A class of bridges of iterated integrals of Brownian motion}
\centerline{related to various boundary value problems involving}
\centerline{the one-dimensional polyharmonic operator}}
\titlerunning{Bridges of iterated integrals of Brownian motion related to various
boundary value problems}
\author{\centerline{Aim\'e LACHAL}}
\institute{
\textsc{Institut National des Sciences Appliqu\'ees de Lyon}\\
P\^ole de Math\'ematiques/Institut Camille Jordan CNRS UMR5208\\
B\^atiment L\'eonard de Vinci, 20 avenue Albert Einstein\\
69621 Villeurbanne Cedex, \textsc{France}\\
\email{aime.lachal@insa-lyon.fr}\\
Web page: http://maths.insa-lyon.fr/$\mbox{}^{\sim}$lachal
}
\maketitle
\begin{abstract}
Let $(B(t))_{t\in [0,1]}$ be the linear Brownian motion and $(X_n(t))_{t\in [0,1]}$
be the $(n-1)$-fold integral of Brownian motion, $n$ being a positive integer:
$$
X_n(t)=\int_0^t \frac{(t-s)^{n-1}}{(n-1)!} \,\dd B(s)\quad\mbox{for any $t\in[0,1]$.}
$$
In this paper we construct several bridges between times $0$ and $1$ of the process
$(X_n(t))_{t\in [0,1]}$ involving conditions on the successive derivatives
of $X_n$ at times $0$ and $1$. For this family of bridges, we make
a correspondance with certain boundary value problems related to
the one-dimensional polyharmonic operator. We also study the classical problem
of prediction. Our results involve various Hermite interpolation polynomials.
\end{abstract}
\keywords{Bridges \and Gaussian processes \and Prediction \and Boundary value problems}
\subclass{Primary 60G15 \and 60G25 \and Secondary 60J65}

\section{Introduction}

Throughout the paper, we shall denote, for any enough differentiable function $f$,
its $i$-th derivative by $f^{(i)}$ or $\dd^i f/\dd t^i$.

Let $(B(t))_{t\in [0,1]}$ be the linear Brownian motion started at $0$ and
$(\beta(t))_{t\in [0,1]}$ be the linear Brownian bridge within the time interval
$[0,1]$: $(\beta(t))_{t\in [0,1]}=(B(t)|B(0)=B(1)=0)_{t\in [0,1]}$.
These processes are Gaussian processes with covariance functions
$$
c_{_B}(s,t)=s\wedge t\quad\mbox{and}\quad c_{_{\beta}}(s,t)=s\wedge t-st.
$$
For a given continuous function $u$, the functions $v_{_B}$ and $v_{_{\beta}}$
respectively defined on $[0,1]$ by
$$
v_{_B}(t)=\int_0^1 c_{_B}(s,t)u(s)\,\dd s\quad\mbox{and}\quad
v_{_{\beta}}(t)=\int_0^1 c_{_{\beta}}(s,t)u(s)\,\dd s
$$
are the solutions of the respective boundary value problems on $[0,1]$:
$$
\begin{cases}
v_{_B}''=-u, \\
v_{_B}(0)=v_{_B}'(1)=0,
\end{cases}
\quad\mbox{and}\quad
\begin{cases}
v_{_{\beta}}''=-u, \\
v_{_{\beta}}(0)=v_{_{\beta}}(1)=0.
\end{cases}
$$
Observe that the differential equations are the same in both cases.
Only the boundary conditions differ. They are Dirichlet-type boundary conditions
for Brownian bridge while they are Dirichlet/Neumann-type boundary conditions
for Brownian motion.

These well-known connections can be extended to the polyharmonic operator
$d^{2n}/dt^{2n}$ where $n$ is a positive integer.
This latter is associated with the $(n-1)$-fold integral
of Brownian motion $(X_n(t))_{t\in[0,1]}$:
$$
X_n(t)=\int_0^t \frac{(t-s)^{n-1}}{(n-1)!} \,\dd B(s)
\quad\mbox{for any $t\in[0,1]$.}
$$
(Notice that all of the derivatives at time~$0$ naturally vanish:
$X_n(0)=X_{n-1}(0)=\dots=X_2(0)=X_1(0)=0$.)
Indeed, the following facts for instance are known (see, e.g., \cite{bahadur}
and \cite{bridge}):
\begin{itemize}
\item
The covariance fonction of the process $(X_n(t))_{t\in[0,1]}$ coincide with
the Green function of the boundary value problem
$$
\begin{cases}
v^{(2n)}=(-1)^nu\quad\mbox{on }[0,1], \\
v(0)=v'(0)=\dots=v^{(n-1)}(0)=0, \\
v^{(n)}(1)=v^{(n+1)}(1)=\dots=v^{(2n-1)}(1)=0;
\end{cases}
$$
\item
The covariance fonction of the bridge $(X_n(t)|X_n(1)=0)_{t\in[0,1]}$
coincide with the Green function of the boundary value problem
$$
\begin{cases}
v^{(2n)}=(-1)^nu\quad\mbox{on }[0,1], \\
v(0)=v'(0)=\dots=v^{(n-1)}(0)=0, \\
v^{(n-1)}(1)=v^{(n+1)}(1)=\dots=v^{(2n-1)}(1)=0;
\end{cases}
$$
\item
The covariance fonction of the bridge $(X_n(t)|X_n(1)=X_{n-1}(1)
=\dots=X_1(1)=0)_{t\in[0,1]}$
coincide with the Green function of the boundary value problem
$$
\begin{cases}
v^{(2n)}=(-1)^nu\quad\mbox{on }[0,1], \\
v(0)=v'(0)=\dots=v^{(n-1)}(0)=0, \\
v(1)=v'(1)=\dots=v^{(n-1)}(1)=0.
\end{cases}
$$
\end{itemize}
Observe that the differential equations and the boundary conditions
at~$0$ are the same in all cases. Only the boundary conditions at~$1$ differ.
Other boundary value problems can be found in~\cite{hn1} and~\cite{hn2}.

We refer the reader to~\cite{dolph} for a pioneering work dealing with the
connections between general Gaussian processes and Green functions; see
also~\cite{carraro}. We also refer
to~\cite{chen},~\cite{ptreg},~\cite{lli},~\cite{lin1},~\cite{nik},~\cite{lin2} and the
references therein for various properties, namely asymptotical study,
of the iterated integrals of Brownian motion as well as
to~\cite{hn1},~\cite{hn2},~\cite{wah1} and~\cite{wah2} for interesting
applications of these processes to statistics.

The aim of this work is to examine all the possible conditioned processes
of $(X_n(t))_{t\in[0,1]}$ involving different events at time~$1$:
$$
(X_n(t)|X_{j_1}(1)=X_{j_2}(1)=\dots=X_{j_m}(1)=0)_{t\in[0,1]}
$$
for a certain number $m$ of events, $1\le m\le n$, and certain indices
$j_1,j_2,\dots,j_m$ such that $1\le j_1<j_2<\dots<j_m\le n$,
and to make the connection with the boundary value problems:
\begin{align*}
\begin{cases}
v^{(2n)}=(-1)^nu\quad\mbox{on }[0,1], \\
v(0)=v'(0)=\dots=v^{(n-1)}(0)=0, \\
v^{(i_1)}(1)=v^{(i_2)}(1)=\dots=v^{(i_n)}(1)=0
\end{cases}
\\[-7ex]
\noalign{\hfill(BVP)}
\end{align*}
for certain indices $i_1,i_2,\dots,i_n$ such that $0\le i_1<i_2<\dots<i_n\le 2n-1$.
Actually, we shall see that this connection does not recover all the
possible boundary value problems and we shall characterize those
sets of indices for which such a connection exists.

The paper is organized as follows.
In Section~\ref{sect-gaussian}, we exhibit the relationships between
general Gaussian processes and Green functions of certain boundary value
problems. In Section~\ref{sect-iteratedIBM}, we consider the iterated integrals
of Brownian motion. In Section~\ref{sect-bridges}, we construct several bridges
associated with the foregoing processes and depict explicitly their connections
with the polyharmonic operator together with various boundary conditions.
One of the main results is Theorem~\ref{th-BVP-gene}.
Moreover, we exhibit several interesting properties of the bridges
(Theorems~\ref{th-drift} and \ref{th-decompo-cov}) and solve the prediction
problem (Theorems~\ref{th-prediction}). In Section~\ref{sect-IBM}, we illustrate
the previous results on the case $n=2$ related to integrated Brownian motion.
Finally, in Section~\ref{sect-general}, we give a characterization for
the Green function of the boundary value problem (BVP) to be a covariance function.
Another one of the main results is Theorem~\ref{Green-sym}.

\section{Gaussian processes and Green functions}\label{sect-gaussian}

We consider a $n$-Markov Gaussian process $(X(t))_{t\in[0,1]}$ evolving on the
real line $\mathbb{R}$. By ``$n$-Markov'', it is understood that the trajectory
$t\mapsto X(t)$ is $n$ times differentiable and the $n$-dimensional process
$(X(t),X'(t),\dots,X^{(n-1)}(t))_{t\in[0,1]}$
is a Markov process. Let us introduce the covariance function of
$(X(t))_{t\in[0,1]}$: for $s,t\in[0,1]$, $c_{_X}(s,t)=\mathbb{E}[X(s)X(t)]$.
It is known (see~\cite{carraro}) that the function $c_{_X}$ admits the
following representation:
\begin{equation}\label{cov-gene}
c_{_X}(s,t)=\sum_{k=0}^{n-1} \varphi_k(s\wedge t) \psi_k(s\vee t)
\end{equation}
where $\varphi_k,\psi_k$, $k\in\{0,1,\dots,n-1\}$, are certain functions.

Let $\mathcal{D}_0,\mathcal{D}_1$ be linear differential operators of order less
than $p$ and let $\mathcal{D}$ be a linear differential operator of order $p$
defined by
$$
\mathcal{D}=\sum_{i=0}^{p} \alpha_i\frac{\dd^i}{\dd t^i}
$$
where $\alpha_0,\alpha_1,\dots,\alpha_p$ are continuous functions on $[0,1]$.
More precisely, we have for any $p$ times differentiable function $f$ and
any $t\in[0,1]$,
$$
(\mathcal{D} f)(t)=\sum_{i=0}^{p} \alpha_i(t) f^{(i)}(t).
$$
%
\begin{theorem}\label{th-gene}
Assume that the functions $\varphi_k,\psi_k$, $k\in\{0,1,\dots,n-1\}$, are
$p$~times differentiable and satisfy the following conditions, for a certain
constant $\kappa$:
\begin{equation}\label{condition-sum}
\sum_{k=0}^{n-1}\left[\varphi_k\psi_k^{(i)}-\varphi_k^{(i)}\psi_k\right]=
\begin{cases} 0 &\mbox{if }\, 0\le i\le p-2,\\ \kappa & \mbox{if }\, i=p-1,
\end{cases}
\end{equation}
\begin{equation}\label{condition-deriv}
\mathcal{D}\varphi_k=\mathcal{D}\psi_k=0,\quad(\mathcal{D}_0 \varphi_k)(0)=0,
\quad (\mathcal{D}_1 \psi_k)(1)=0.
\end{equation}
Then, for any continuous function $u$ on $[0,1]$, the function $v$ defined
on $[0,1]$ by
$$
v(t)=\int_0^1 c_{_X}(s,t)u(s)\,\dd s
$$
solves the boundary value problem
\begin{equation}\label{BVP-gene}
\begin{cases}
\mathcal{D} v= \alpha_p u, \\ (\mathcal{D}_0 v)(0)=(\mathcal{D}_1 v)(1)=0.
\end{cases}
\end{equation}
\end{theorem}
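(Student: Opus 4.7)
The natural starting point is to split the integral defining $v$ at $s=t$ using the representation (\ref{cov-gene}). Writing $A_k(t)=\int_0^t \varphi_k(s)u(s)\,\dd s$ and $B_k(t)=\int_t^1 \psi_k(s)u(s)\,\dd s$, one obtains
$$
v(t)=\sum_{k=0}^{n-1}\bigl[\psi_k(t)A_k(t)+\varphi_k(t)B_k(t)\bigr].
$$
The heart of the proof is then to differentiate this expression $p$ times and keep careful track of the boundary terms produced by the $t$-dependent limits of $A_k$ and $B_k$. Since $A_k'=\varphi_k u$ and $B_k'=-\psi_k u$, a single differentiation of $\psi_k^{(i)}A_k+\varphi_k^{(i)}B_k$ picks up the extra contribution $u(t)\bigl[\psi_k^{(i)}\varphi_k-\varphi_k^{(i)}\psi_k\bigr]$, and summing over $k$ exactly produces the Wronskian-type quantity appearing in hypothesis~(\ref{condition-sum}).

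The plan is therefore to prove by induction on $i\in\{0,1,\dots,p-1\}$ the clean formula
$$
v^{(i)}(t)=\sum_{k=0}^{n-1}\bigl[\psi_k^{(i)}(t)A_k(t)+\varphi_k^{(i)}(t)B_k(t)\bigr],
$$
the step from $i$ to $i+1$ being legitimate precisely because assumption~(\ref{condition-sum}) kills the extraneous term for $0\le i\le p-2$. At the final differentiation, $i=p-1\to p$, the same calculation now contributes $\kappa u(t)$ instead of zero, yielding
$$
v^{(p)}(t)=\sum_{k=0}^{n-1}\bigl[\psi_k^{(p)}(t)A_k(t)+\varphi_k^{(p)}(t)B_k(t)\bigr]+\kappa\,u(t).
$$
Multiplying each $v^{(i)}$ by $\alpha_i(t)$ and summing, the $A_k$-coefficients reassemble into $(\mathcal{D}\psi_k)(t)$ and the $B_k$-coefficients into $(\mathcal{D}\varphi_k)(t)$, both of which vanish by~(\ref{condition-deriv}); only the term $\kappa\alpha_p(t)u(t)$ survives, giving the desired differential equation (up to the normalization $\kappa=1$ implicit in the statement).

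For the boundary conditions, observe that $A_k(0)=0$ and $B_k(1)=0$, so the clean formula above (valid for every order $<p$, which is precisely the bound imposed on $\mathcal{D}_0$ and $\mathcal{D}_1$) gives
$$
v^{(i)}(0)=\sum_{k}\varphi_k^{(i)}(0)B_k(0),\qquad v^{(i)}(1)=\sum_{k}\psi_k^{(i)}(1)A_k(1).
$$
Linearly combining these with the coefficients of $\mathcal{D}_0$ and $\mathcal{D}_1$ respectively yields $(\mathcal{D}_0 v)(0)=\sum_k B_k(0)(\mathcal{D}_0\varphi_k)(0)$ and $(\mathcal{D}_1 v)(1)=\sum_k A_k(1)(\mathcal{D}_1\psi_k)(1)$, both zero by~(\ref{condition-deriv}).

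The only genuine obstacle is the bookkeeping in the inductive differentiation: one must verify that the order restriction on $\mathcal{D}_0,\mathcal{D}_1$ (strictly less than $p$) matches the range in which the clean formula for $v^{(i)}$ holds, and that the sign and indexing conventions in~(\ref{condition-sum}) correspond exactly to the Wronskian-like remainder that pops out at each differentiation. Once this alignment is checked, the three ingredients (clean formula, $\mathcal{D}\varphi_k=\mathcal{D}\psi_k=0$, vanishing of $A_k$ at $0$ and $B_k$ at $1$) combine directly to give both the equation $\mathcal{D}v=\alpha_p u$ and the boundary conditions of~(\ref{BVP-gene}).
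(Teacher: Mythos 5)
Your proposal is correct and follows essentially the same route as the paper: the same splitting of the integral at $s=t$, the same inductive computation of $v^{(i)}$ using the Wronskian-type hypothesis to suppress the boundary terms for $i\le p-1$ and to produce $\kappa u$ at order $p$, and the same evaluation of $(\mathcal{D}_0 v)(0)$ and $(\mathcal{D}_1 v)(1)$ via the vanishing of $A_k(0)$ and $B_k(1)$. Your remark that the conclusion reads $\mathcal{D}v=\kappa\alpha_p u$ with $\kappa$ implicitly normalized to $1$ is accurate and matches what the paper's own computation actually yields.
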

%
\begin{remark}
If the problem~(\ref{BVP-gene}) is determining, that is if it has a unique solution,
then the covariance function $c_{_X}$ is exactly the Green function of the
boundary value problem~(\ref{BVP-gene}).
\end{remark}
%
\begin{proof}
In view of~(\ref{cov-gene}), the function $v$ can be written as
$$
v(t)=\sum_{k=0}^{n-1}\left[\psi_k(t)
\int_0^t \varphi_k(s)u(s)\,\dd s+\varphi_k(t) \int_t^1 \psi_k(s)u(s)\,\dd s\right]\!.
$$
The derivative of $v$ is given by
\begin{align*}
v'(t)
&
=\sum_{k=0}^{n-1}\left[\psi'_k(t) \int_0^t \varphi_k(s)u(s)\,\dd s
+\varphi_k'(t) \int_t^1 \psi_k(s)u(s)\,\dd s\right]
\end{align*}
and its second order derivative, since
$\sum_{k=0}^{n-1}\left[\varphi_k\psi_k'-\varphi_k'\psi_k\right]=0$, by
\begin{align*}
v''(t)
&
=\sum_{k=0}^{n-1}\left[\psi''_k(t) \int_0^t \varphi_k(s)u(s)\,\dd s
+\varphi''_k(t) \int_t^1 \psi_k(s)u(s)\,dd s+\left[\varphi_k(t)\psi'_k(t)
-\varphi'_k(t)\psi_k(t)\right]u(t)\right]
\\
&
=\sum_{k=0}^{n-1}\left[\psi''_k(t) \int_0^t \varphi_k(s)u(s)\,\dd s
+\varphi''_k(t) \int_t^1 \psi_k(s)u(s)\,\dd s\right]\!.
\end{align*}
More generally, because of the assumptions~(\ref{condition-sum}),
we easily see that, for $i\in\{0,1,\dots, p-1\}$,
\begin{align*}
v^{(i)}(t)
&
=\sum_{k=0}^{n-1}\left[\psi^{(i)}_k(t) \int_0^t \varphi_k(s)u(s)\,\dd s
+\varphi^{(i)}_k(t) \int_t^1 \psi_k(s)u(s)\,\dd s\right]
\end{align*}
and the $p$-th order derivative of $v$ is given by
\begin{align*}
v^{(p)}(t)
&
=\sum_{k=0}^{n-1}\left[\psi^{(p)}_k(t) \int_0^t \varphi_k(s)u(s)\,\dd s
+\varphi^{(p)}_k(t) \int_t^1 \psi_k(s)u(s)\,\dd s
\right.
\\
&\hphantom{=\,}
\left.+\left[\varphi_k(t)\psi^{(p-1)}_k(t)-\varphi^{(p-1)}_k(t)\psi(t)\right]u(t)\right]
\\
&
=\sum_{k=0}^{n-1}\left[\psi^{(p)}_k(t) \int_0^t \varphi_k(s)u(s)\,\dd s
+\varphi^{(p)}_k(t) \int_t^1 \psi_k(s)u(s)\,\dd s\right]+\kappa u(t).
\end{align*}
Actually, we have proved that, for $i\in\{0,1,\dots, p-1\}$,
\begin{equation}\label{deriv-v}
v^{(i)}(t)=\int_0^t \frac{\partial^i \!c_{_X}}{\partial t^i}(s,t)\,u(s)\dd s.
\end{equation}
Finally, due to~(\ref{condition-deriv}),
\begin{align*}
\mathcal{D} v(t)
&
=\sum_{k=0}^{n-1}\left[\mathcal{D}\psi_k(t) \int_0^t \varphi_k(s)u(s)\,\dd s
+\mathcal{D}\varphi_k(t) \int_t^1 \psi_k(s)u(s)\,\dd s\right]+\kappa\alpha_p u(t)
=\kappa\alpha_p u(t).
\end{align*}

Concerning the boundary value conditions, referring to~(\ref{condition-deriv}),
we similarly have
$$
(\mathcal{D}_0 v)(0)=\sum_{k=0}^{n-1}(\mathcal{D}_0 \varphi_k)(0)
\int_0^1 \psi_k(s)u(s)\,\dd s=0,\quad
(\mathcal{D}_1 v)(1)=\sum_{k=0}^{n-1}(\mathcal{D}_1 \psi_k)(1)
\int_0^1 \varphi_k(s)u(s)\,\dd s=0.
$$
The proof of Theorem~\ref{th-gene} is finished.
\qed
\end{proof}

In the two next sections, we construct processes connected to
the equation $\mathcal{D} v=u$ subject to the boundary value conditions at $0$:
$(\mathcal{D}^i_0 v)(0)=0$ for $i\in\{0,1,\dots,n-1\}$ and others at $1$ that
will be discussed subsequently, where $\mathcal{D}$ and $\mathcal{D}^i_0$ are
the differential operators ($\mathcal{D}$ being of order $p=2n$) defined by
$$
\mathcal{D}=(-1)^n\frac{\dd^{2n}}{\dd t^{2n}},\quad
\mathcal{D}^i_0=\frac{\dd^{i}}{\dd t^{i}}.
$$
\section{The $(n-1)$-fold integral of Brownian motion}\label{sect-iteratedIBM}

Let $(B(t))_{t\in [0,1]}$ be the linear Brownian motion limited to the time
interval $[0,1]$ and started at $0$. We introduce the $(n-1)$-fold integral
of Brownian motion: for any $t\in[0,1]$,
$$
X_n(t)=\int_0^t \frac{(t-s)^{n-1}}{(n-1)!} \,\dd B(s).
$$
In particular, $X_1=B$.
The trajectories of $(X_n(t))_{t\in [0,1]}$ are $n$ times differentiable
and we have $X_n^{(i)}=X_{n-i}$ for $0\le i\le n-1$.
Moreover, we have at time~$0$ the equalities $X_n(0)=X_{n-1}(0)=\dots=X_2(0)=X_1(0)=0$.
The process $(X_n(t))_{t\in [0,1]}$ is a $n$-Markov Gaussian process
since the $n$-dimensional process $(X_n(t),X_{n-1}(t),\dots,X_1(t))_{t\in [0,1]}$
is Markovian. The covariance function of the Gaussian process
$(X_n(t))_{t\in [0,1]}$ is given by
$$
c_{_{X_n}}(s,t)=\int_0^{s\wedge t} \frac{(s-u)^{n-1}}{(n-1)!}
\,\frac{(t-u)^{n-1}}{(n-1)!} \,\dd u.
$$
In order to apply Theorem~\ref{th-gene}, we decompose $c_{_{X_n}}$ into
the form~(\ref{cov-gene}). We have for, e.g., $s\le t$,
\begin{align*}
c_{_{X_n}}(s,t)
&
=\int_0^s \frac{(s-u)^{n-1}}{(n-1)!^2}\left[\,\sum_{k=0}^{n-1}\binom{n-1}{k}
(-u)^{n-1-k}t^k\right] \dd u
\\
&
=\sum_{k=0}^{n-1} (-1)^{n-1-k}\,\frac{t^k}{k!} \int_0^s \frac{(s-u)^{n-1}}{(n-1)!}\,
\frac{u^{n-1-k}}{(n-1-k)!}\, \dd u
\\
&
=\sum_{k=0}^{n-1} (-1)^{n-1-k}\,\frac{s^{2n-1-k}}{(2n-1-k)!}\,\frac{t^k}{k!}.
\end{align*}
We then obtain the following representation:
$$
c_{_{X_n}}(s,t)=\sum_{k=0}^{n-1} \varphi_k(s) \psi_k(t)
$$
with, for any $k\in\{0,1,\dots,n-1\}$,
$$
\varphi_k(s)=(-1)^{n-1-k}\,\frac{s^{2n-1-k}}{(2n-1-k)!},\quad
\psi_k(t)=\frac{t^k}{k!}.
$$
We state below a result of~\cite{bridge} that we revisit here by using
Theorem~\ref{th-gene}.
%
\begin{theorem}\label{th-BVP-IBM}
Let $u$ be a fixed continuous function on $[0,1]$.
The function $v$ defined on $[0,1]$ by
$$
v(t)=\int_0^1 c_{_{X_n}}(s,t)u(s)\,\dd s
$$
is the solution of the boundary value problem
\begin{equation}\label{BVP-IBM}
\begin{cases}
v^{(2n)}=(-1)^n u &\mbox{on }[0,1],
\\
v^{(i)}(0)=0 &\mbox{for } i\in\{0,1,\dots,n-1\},
\\
v^{(i)}(1)=0 &\mbox{for } i\in\{n,n+1,\dots,2n-1\}.
\end{cases}
\end{equation}
\end{theorem}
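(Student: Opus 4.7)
The plan is to apply Theorem~\ref{th-gene} using the decomposition $c_{X_n}(s,t) = \sum_{k=0}^{n-1} \varphi_k(s)\psi_k(t)$ (for $s\le t$) established immediately above, with the differential operator $\mathcal{D} = (-1)^n \dd^{2n}/\dd t^{2n}$ of order $p = 2n$. Since there are $n$ boundary conditions at each endpoint, I would invoke Theorem~\ref{th-gene} once for each pair $(\mathcal{D}_0, \mathcal{D}_1) = (\dd^i/\dd t^i, \dd^j/\dd t^j)$ with $i \in \{0,\dots,n-1\}$ and $j \in \{n,\dots,2n-1\}$; the ODE comes out the same from each application, while the $2n$ boundary conditions accumulate.

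Most of the hypotheses of Theorem~\ref{th-gene} are immediate from the fact that the $\varphi_k$'s and $\psi_k$'s are monomials. Both $\varphi_k(s) = (-1)^{n-1-k} s^{2n-1-k}/(2n-1-k)!$ and $\psi_k(t) = t^k/k!$ have degree at most $2n-1$, so $\varphi_k^{(2n)} = \psi_k^{(2n)} = 0$, whence $\mathcal{D}\varphi_k = \mathcal{D}\psi_k = 0$. The boundary condition $\varphi_k^{(i)}(0) = 0$ holds for every $i \in \{0,\dots,n-1\}$ because $\varphi_k$ has a zero of order $2n-1-k \ge n$ at the origin. The boundary condition $\psi_k^{(j)}(1) = 0$ for every $j \in \{n,\dots,2n-1\}$ is even stronger: $\psi_k$ has degree $k \le n-1 < j$, so $\psi_k^{(j)} \equiv 0$ identically.

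The main obstacle is the Wronskian-type identity~(\ref{condition-sum}) with $p = 2n$. Writing $W_i(t) = \sum_{k=0}^{n-1} [\varphi_k(t)\psi_k^{(i)}(t) - \varphi_k^{(i)}(t)\psi_k(t)]$, one has $W_i(t) = \lim_{s\to t^-}\partial_t^i c_{X_n}(s,t) - \lim_{s\to t^+}\partial_t^i c_{X_n}(s,t)$ by inspection of the series expressions for $c_{X_n}$ on the two sides of the diagonal. The cleanest route I see is to compute these one-sided limits directly from the integral representation $c_{X_n}(s,t) = \int_0^{s\wedge t}(s-u)^{n-1}(t-u)^{n-1}/((n-1)!)^2 \,\dd u$: after differentiating $i$ times in $t$ on each side of $\{s=t\}$ (the boundary terms generated by the $t$-dependent upper limit vanish as long as $i \le 2n-2$, thanks to the factor $(t-u)^{n-1}$), the two limits agree for $0 \le i \le 2n-2$, whereas for $i = 2n-1$ the limit from $s<t$ is $0$ and the limit from $s>t$ is $(-1)^{n-1}$. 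Hence $W_i \equiv 0$ for $i \le 2n-2$ and $\kappa = W_{2n-1} \equiv (-1)^n$. Feeding this into Theorem~\ref{th-gene} yields $\mathcal{D}v = \kappa \alpha_{2n} u = (-1)^n \cdot (-1)^n u = u$, equivalently $v^{(2n)} = (-1)^n u$, and gathering the $2n$ boundary conditions from the separate applications of Theorem~\ref{th-gene} delivers~(\ref{BVP-IBM}).
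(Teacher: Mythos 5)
Your proposal is correct, and it follows the paper's overall strategy --- verify the hypotheses of Theorem~\ref{th-gene} for the decomposition $c_{_{X_n}}(s,t)=\sum_{k=0}^{n-1}\varphi_k(s\wedge t)\psi_k(s\vee t)$, the boundary conditions being immediate from the monomial form of $\varphi_k$ and $\psi_k$ --- but you establish the key Wronskian condition~(\ref{condition-sum}) by a genuinely different route. The paper computes $\sum_k[\varphi_k\psi_k^{(i)}-\varphi_k^{(i)}\psi_k]$ directly from the explicit monomials and reduces it, after an index reflection, to an alternating binomial sum equal to $\delta_{i,2n-1}$ up to sign. You instead identify this quantity with the jump of $\partial^i c_{_{X_n}}/\partial t^i$ across the diagonal and read the jump off the integral representation: for $t>s$ the kernel is a polynomial of degree $n-1$ in $t$, so all $t$-derivatives of order $\ge n$ vanish on that side, while for $t<s$ one finds $\partial^i c_{_{X_n}}/\partial t^i=(-1)^{i-n}(s-t)^{2n-1-i}/(2n-1-i)!$ for $n\le i\le 2n-1$, whose limit as $s\to t^+$ is $0$ for $i\le 2n-2$ and $(-1)^{n-1}$ for $i=2n-1$; hence $\kappa=(-1)^n$ and $v^{(2n)}=(-1)^nu$ as required. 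This avoids the binomial manipulation entirely and is arguably more transparent; it is essentially the same computation the paper performs later, in the proof of Theorem~\ref{th-drift}, to obtain~(\ref{deriv-cov}). Two minor points. First, your parenthetical claim that the boundary terms from the moving upper limit ``vanish as long as $i\le 2n-2$'' is not literally true: the first nonzero boundary term appears already at $i=n$ and equals $(s-t)^{n-1}/(n-1)!$; what is true, and all you need, is that for $n\le i\le 2n-2$ the resulting expression carries a positive power of $(s-t)$ and therefore vanishes in the limit $s\to t^+$. Second, like the paper you do not prove uniqueness here, so the article ``the solution'' still needs justification; the paper explicitly defers this to the energy argument in the proof of Theorem~\ref{th-BVP-gene}, and your write-up should do likewise.
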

%
\begin{proof}
Let us check that the conditions~(\ref{condition-sum}) and~(\ref{condition-deriv})
of Theorem~\ref{th-gene} are fulfilled. First, we have
\begin{align*}
\lqn{\sum_{k=0}^{n-1}\left[\varphi_k(t)\psi_k^{(i)}(t)
-\varphi_k^{(i)}(t)\psi_k(t)\right]}
&
=\sum_{k=0}^{n-1} (-1)^{n-1-k}\left[\ind_{\{k\ge i\}}
\,\frac{t^{2n-1-k}}{(2n-1-k)!}\,\frac{t^{k-i}}{(k-i)!}
-\ind_{\{k\le 2n-1-i\}}\,\frac{t^{2n-1-i-k}}{(2n-1-i-k)!}\,\frac{t^k}{k!}\right]
\\
&
=(-1)^{n-1}\,\frac{t^{2n-1-i}}{(2n-1-i)!}
\left[\vphantom{\sum_n^n}\right.\!\ind_{\{i\le n-1\}}\sum_{k=i}^{n-1}
(-1)^{k}\binom{2n-1-i}{k-i}
-\sum_{k=0}^{(2n-1-i)\wedge(n-1)} (-1)^{k}\binom{2n-1-i}{k}\!\!\left.
\vphantom{\sum_n^n}\right]
\\
&
=(-1)^{n-1}\,\frac{t^{2n-1-i}}{(2n-1-i)!}
\left[\ind_{\{i\le n-1\}}\left(\,\sum_{k=0}^{n-1-i} (-1)^{i+k}\binom{2n-1-i}{k}
-\sum_{k=0}^{n-1} (-1)^{k}\binom{2n-1-i}{k}\right)\!\!\right.
\\
&
\hphantom{=\,}\left.-\ind_{\{i\ge n\}}\sum_{k=0}^{2n-1-i}
(-1)^{k}\binom{2n-1-i}{k}\!\right]\!.
\end{align*}
Performing the transformation $k\mapsto 2n-1-i-k$ in the first sum lying
within the last equality, we get
\begin{align*}
\lqn{\sum_{k=0}^{n-1-i} (-1)^{i+k}\binom{2n-1-i}{k}
-\sum_{k=0}^{n-1} (-1)^{k}\binom{2n-1-i}{k}}
&
=\sum_{k=n}^{2n-1-i} (-1)^{k}\binom{2n-1-i}{2n-1-i-k}
+\sum_{k=0}^{n-1} (-1)^{k}\binom{2n-1-i}{k}
=\sum_{k=0}^{2n-1-i} (-1)^k\binom{2n-1-i}{k}=\delta_{i,2n-1}
\end{align*}
and then
$$
\sum_{k=0}^{n-1}\left[\varphi_k(t)\psi_k^{(i)}(t)-\varphi_k^{(i)}(t)\psi_k(t)\right]
=(-1)^n\delta_{i,2n-1}.
$$
On the other hand, setting
$$
\mathcal{D}=(-1)^n\frac{\dd^{2n}}{\dd t^{2n}},\quad\mathcal{D}_0^{i}
=\frac{\dd^i}{\dd t^i},\quad
\mathcal{D}_1^{i}=\frac{\dd^{i+n}}{\dd t^{i+n}},
$$
we clearly see that, for any $k\in\{0,1,\dots,n-1\}$,
$$
\mathcal{D} \varphi_k=\mathcal{D} \psi_k=0\quad\mbox{and}\quad\mathcal{D}_0^i
\varphi_k(0)=\mathcal{D}_1^i \psi_k(1)=0\quad\mbox{for } i\in\{0,1,\dots,n-1\}.
$$
Consequently, by Theorem~\ref{th-gene}, we see that the function $v$ solves the
boundary value problem~(\ref{BVP-IBM}). The uniqueness part
will follow from a more general argument stated in the proof of
Theorem~\ref{th-BVP-gene}.
\qed
\end{proof}

\section{Various bridges of the $(n-1)$-fold integral of Brownian motion}\label{sect-bridges}

In this section, we construct various bridges related to $(X_n(t))_{t\in [0,1]}$.
More precisely, we take $(X_n(t))_{t\in [0,1]}$ conditioned on the event
that certain derivatives vanish at time~$1$.
Let us recall that all the derivatives at time~$0$ naturally vanish:
$X_n(0)=X_{n-1}(0)=\dots=X_2(0)=X_1(0)=0$.

For any $m\in\{0,1,\dots,n\}$, let $J=\{j_1,j_2,\dots,j_m\}$ be a subset of
$\{1,2,\dots,n\}$ with $1\le j_1<j_2<\dots<j_m\le n$ and the convention that
for $m=0$, $J=\varnothing$. We see that for each $m\in\{0,1,\dots,n\}$, we can
define $\binom{n}{m}$ subsets of indices $J$, and the total number of sets $J$
is then $\sum_{m=0}^n \binom{n}{m}=2^n$. Set for any $t\in[0,1]$
$$
Y(t)=(X_n(t) | X_j(1)=0,j\in J)=
(X_n(t)|X_{j_1}(1)=X_{j_2}(1)=\dots=X_{j_m}(1)=0).
$$
In this way, we define $2^n$ processes $(Y(t))_{t\in[0,1]}$ that we shall
call ``bridges'' of $(X_n(t))_{t\in [0,1]}$. In particular,
\begin{itemize}
\item
for $J=\varnothing$, we simply have $(Y(t))_{t\in[0,1]}=(X_n(t))_{t\in[0,1]}$;
\item
for $J=\{1\}$, the corresponding process is the $(n-1)$-fold integral of Brownian bridge
$$
(X_n(t) | X_1(1)=0)_{t\in[0,1]}=\left(\int_0^t \frac{(t-s)^{n-1}}{(n-1)!}\,
\dd \beta(s)\right)_{t\in[0,1]};
$$
\item
for $J=\{n\}$, the corresponding process is the ``single'' bridge of
$(n-1)$-fold integral of Brownian Brownian:
$$
(X_n(t) | X_n(1)=0)_{t\in[0,1]}=\left(\int_0^t \frac{(t-s)^{n-1}}{(n-1)!}\,\dd B(s)
\,\bigg| \int_0^1 \frac{(1-s)^{n-1}}{(n-1)!}\,\dd B(s)=0\right)_{t\in[0,1]};
$$
\item
for $J=\{1,2,\dots,n\}$, the corresponding process is
$$(X_n(t) | X_n(1)=X_{n-1}(1)=\dots=X_1(1)=0)_{t\in[0,1]}.$$
This is the natural bridge related to the $n$-dimensional Markov process
$(X_n(t),X_{n-1}(t),\dots,\linebreak X_1(t))_{t\in[0,1]}.$
\end{itemize}

In this section, we exhibit several interesting properties of the various processes
$(Y(t))_{t\in [0,1]}$.
One of the main goals is to relate these bridges to additional
boundary value conditions at $1$. For this, we introduce the following subset $I$
of $\{0,1,\dots,2n-1\}$:
$$
I=(n-J)\cup\left[\{n,n+1,\dots,2n-1\}\backslash(J+n-1)\right]
$$
with
\begin{align*}
n-J
&
=\{n-j,j\in J\}=\{n-j_1,\dots,n-j_m\},
\\
J+n-1
&
=\{j+n-1,j\in J\}=\{j_1+n-1,\dots,j_m+n-1\}.
\end{align*}
The cardinality of $I$ is $n$. Actually, the set $I$ will be used further for enumerating
the boundary value problems which can be related to the bridges labeled by $J$.
Conversely, $I$ yields $J$ through
$$
J=(n-I)\cap\{1,2,\dots,n\}.
$$
In the table below, we give some examples of sets $I$ and $J$.
$$
\begin{array}{|@{\hspace{.3em}}c@{\hspace{.3em}}|@{\hspace{.3em}}c@{\hspace{.3em}}|
@{\hspace{.3em}}c@{\hspace{.3em}}|@{\hspace{.3em}}c@{\hspace{.3em}}|
@{\hspace{.3em}}c@{\hspace{.3em}}|}
\hline\vspace{-2ex}&&&&
\\
I & \{0,1,\dots,n-1\} & \{n,n+1,\dots,2n-1\} & \{n-1,n+1,\dots,2n-1\} & \{0,n+1,\dots,2n-2\}
\\
\vspace{-2ex}&&&&
\\
\hline\vspace{-2ex}&&&&
\\
J & \{1,2,\dots,n\} & \varnothing & \{1\} & \{n\}
\\[-2ex]
&&&&
\\
\hline
\end{array}
$$

\subsection{Polynomial drift description}

Below, we provide a representation of $(Y(t))_{t\in [0,1]}$ by means of
$(X_n(t))_{t\in [0,1]}$ subject to a random polynomial drift.
%
\begin{theorem}\label{th-drift}
We have the distributional identity
$$
(Y(t))_{t\in[0,1]}\stackrel{d}{=}\bigg(X_n(t)-\sum_{j\in J} P_j(t)X_j(1)\bigg)_{t\in[0,1]}
$$
where the functions $P_j$, $j\in J$, are Hermite interpolation polynomials on $[0,1]$
characterized by
\begin{equation}\label{condition-hermite}
\begin{cases}
P_j^{(2n)}=0,
\\[1ex]
P_j^{(i)}(0)=0 &\mbox{for } i\in\{0,1,\dots,n-1\},
\\[1ex]
P_j^{(i)}(1)=\delta_{j,n-i} &\mbox{for } i\in I.
\end{cases}
\end{equation}
\end{theorem}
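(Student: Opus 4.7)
The plan is to exploit the joint Gaussianity of $(X_n(t))_{t \in [0,1]}$ together with the conditioning vector $\vec W := (X_j(1))_{j \in J}$. By the standard regression formula, the bridge admits the distributional representation
$$
(Y(t))_{t \in [0,1]} \stackrel{d}{=} \Bigl(X_n(t) - \textstyle\sum_{j \in J} \pi_j(t)\, X_j(1)\Bigr)_{t \in [0,1]},
$$
where $\vec \pi(t) = (\pi_j(t))_{j \in J}$ is the unique solution of $\Sigma\,\vec \pi(t) = \vec b(t)$, with invertible Gram matrix $\Sigma_{jk} := \mathbb{E}[X_j(1) X_k(1)]$ and $b_k(t) := \mathbb{E}[X_n(t) X_k(1)]$. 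Indeed, the residual $X_n(t) - \sum_j \pi_j(t) X_j(1)$ is then orthogonal to every $X_k(1)$ in $L^2$, hence (by joint Gaussianity) independent of $\vec W$, and conditioning on $\vec W = 0$ returns its law unchanged. The proof thus reduces to identifying each $\pi_j$ with the Hermite polynomial $P_j$ of (\ref{condition-hermite}).

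My plan is to verify the three groups of conditions in (\ref{condition-hermite}) directly for $\pi_j$. The explicit formula
$$
b_k(t) = \int_0^t \frac{(t-s)^{n-1}(1-s)^{k-1}}{(n-1)!(k-1)!}\,\dd s
$$
shows that $b_k$ is a polynomial in $t$ of degree at most $n - 1 + k \le 2n - 1$; hence $\pi_j$, being a constant-coefficient linear combination of the $b_k$'s, is also a polynomial of degree at most $2n - 1$, giving $\pi_j^{(2n)} = 0$. For $i \in \{0, \dots, n - 1\}$, differentiating under the integral yields $b_k^{(i)}(0) = \mathbb{E}[X_{n-i}(0)\, X_k(1)] = 0$ since $X_{n-i}(0) = 0$, and therefore $\pi_j^{(i)}(0) = 0$.

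The decisive step is the evaluation at $t = 1$ for $i \in I = (n - J) \cup [\{n, \ldots, 2n - 1\} \setminus (J + n - 1)]$, where I would split into two sub-cases. If $i = n - l$ with $l \in J$, then $b_k^{(i)}(1) = \mathbb{E}[X_l(1) X_k(1)] = \Sigma_{l k}$, so $\pi_j^{(i)}(1) = (\Sigma^{-1}\Sigma)_{j l} = \delta_{j l} = \delta_{j, n - i}$, as required. If instead $i \in \{n, \ldots, 2n-1\} \setminus (J + n - 1)$, I iterate the differentiation past the integral limit, using $b_k^{(n - 1)}(t) = [1 - (1 - t)^k]/k!$, then $b_k^{(n + l)}(t) = (-1)^l (1 - t)^{k - 1 - l}/(k - 1 - l)!$ for $0 \le l \le k - 1$ (and zero beyond). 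Evaluation at $t = 1$ gives $b_k^{(i)}(1) = (-1)^{k - 1}\,\delta_{i, n + k - 1}$; since $i \notin J + n - 1$ while $n + k - 1 \in J + n - 1$ for every $k \in J$, we obtain $b_k^{(i)}(1) = 0$, whence $\pi_j^{(i)}(1) = 0 = \delta_{j, n - i}$ (the right-hand side vanishing because $n - i \le 0 \notin J$). All conditions of (\ref{condition-hermite}) are thus established for $\pi_j$.

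The principal obstacle I anticipate is not the above polynomial bookkeeping, which is mechanical, but rather the uniqueness of the polynomial solution to (\ref{condition-hermite}): because the derivative orders prescribed at $1$ need not form a consecutive block starting from $0$, (\ref{condition-hermite}) is a Birkhoff-type rather than a classical Hermite interpolation problem, and its unisolvence is not a priori clear. The probabilistic construction supplies the $|J|$ solutions $\pi_j$ for the $|J|$ canonical right-hand sides, and together with the $n$ conditions at $0$ one obtains a linearly independent system of $n + |J|$ functionals on the $2n$-dimensional space of polynomials of degree at most $2n - 1$; verifying that the remaining $n - |J|$ functionals at $1$ complete this to a basis, and thereby identifying $\pi_j$ with the Hermite polynomial $P_j$, is the last step and ties in naturally with the characterization of admissible index sets reserved for Section~\ref{sect-general}.
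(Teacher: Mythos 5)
Your proposal is correct and follows essentially the same route as the paper: the Gaussian regression identity reduces everything to the linear system $\Sigma\,\vec{\pi}(t)=\vec{b}(t)$ with $\Sigma_{jk}=\mathbb{E}[X_j(1)X_k(1)]$, and the three groups of conditions in~(\ref{condition-hermite}) are read off from the same derivative computations of $\mathbb{E}[X_n(t)X_k(1)]$ at $0$ and $1$, including the key identity $b_k^{(i)}(1)=(-1)^{k-1}\delta_{i,n+k-1}$ for $i\ge n$. Two minor remarks: the invertibility of $\Sigma=\left(1/(j+k-1)\right)_{j,k\in J}$, which you assert, is justified in the paper by writing the associated quadratic form as $\int_0^1\bigl(\sum_{j\in J}x_j u^{j-1}\bigr)^2\,\dd u$; and the Birkhoff-type unisolvence you flag as the remaining obstacle is exactly what the paper also defers, to its appendix Lemma~\ref{hermite}, which settles it by an explicit $\mathbf{L}\mathbf{U}$-factorization of the interpolation matrix.
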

%
\begin{remark}
In the case where $n=2$, we retrieve a result of~\cite{drift}.
Moreover, the conditions~(\ref{condition-hermite}) characterize the
polynomials $P_j$, $j\in J$. We prove this fact in Lemma~\ref{hermite} in the appendix.
\end{remark}
%
\begin{proof}
By invoking classical arguments of Gaussian processes theory, we have the
distributional identity
$$
(Y(t))_{t\in[0,1]}\stackrel{d}{=}\bigg(X_n(t)-\sum_{j\in J} P_j(t)X_j(1)\bigg)_{t\in[0,1]}
$$
where the functions $P_j$, $j\in J$, are such that $\mathbb{E}[Y(t)X_k(1)]=0$
for all $k\in J$. We get the linear system
\begin{equation}\label{system}
\sum_{j\in J} \mathbb{E}[X_j(1)X_k(1)]\,P_j(t)=\mathbb{E}[X_n(t)X_k(1)],\quad k\in J.
\end{equation}
We plainly have
$$
\mathbb{E}[X_j(s)X_k(t)]=\int_0^{s\wedge t} \frac{(s-u)^{j-1}}{(j-1)!}\,
\frac{(t-u)^{k-1}}{(k-1)!}\,\dd u.
$$
Then, the system~(\ref{system}) writes
$$
\sum_{j\in J} \frac{1}{j+k-1}\,\frac{P_j(t)}{(j-1)!}
=\int_0^t \frac{(t-u)^{n-1}}{(n-1)!}\,(1-u)^{k-1}\,\dd u.
$$
The matrix of this system $\left(1/(j+k-1)\right)_{j,k\in J}$ is regular as
it can be seen by introducing the related quadratic form
which is definite positive. Indeed, for any real numbers $x_j$, $j\in J$, we have
$$
\sum_{j,k\in J} \frac{x_jx_k}{j+k-1}
=\int_0^1 \left(\vphantom{\sum_n^n}\right.\!\sum_{j,k\in J}
x_jx_k u^{j+k-2}\!\!\left.\vphantom{\sum_n^n}\right)\dd u
=\int_0^1 \left(\vphantom{\sum_n^n}\right.\!\sum_{j\in J}
x_j u^{j-1}\!\!\left.\vphantom{\sum_n^n}\right)^{\!2}\dd u\ge 0
$$
and
$$
\sum_{j,k\in J} \frac{x_jx_k}{j+k-1}=0\quad\mbox{if and only if}
\quad \forall j\in J,\, x_j=0.
$$
Thus, the system~(\ref{system}) has a unique solution. As a result, the $P_j$
are linear combinations of the functions $t\mapsto\int_0^t (t-u)^{n-1}(1-u)^{k-1}\,\dd u$
which are polynomials of degree less than $n+k$.
Hence, $P_j$ is a polynomial of degree at most $2n-1$.

We now compute the derivatives of $P_j$ at $0$ en $1$.
We have $P_j^{(i)}(0)=0$ for $i\in\{0,1,\dots,n-1\}$ since the functions
$t\mapsto\int_0^t (t-u)^{n-1}(1-u)^{k-1}\,\dd u$ plainly enjoy this property.
For checking that $P_j^{(i)}(1)=\delta_{j,n-i}$ for $i\in I$,
we successively compute
\begin{itemize}
\item
for $i\in\{0,1,\dots,n-1\}$,
$$
\frac{\dd^i}{\dd t^i} \,\mathbb{E}[X_n(t)X_k(1)]=\mathbb{E}[X_{n-i}(t)X_k(1)];
$$
\item
for $i=n-1$,
$$
\frac{\dd^i}{\dd t^i}\,\mathbb{E}[X_n(t)X_k(1)]=\mathbb{E}[B(t)X_k(1)]
=\int_0^t \frac{(1-u)^{k-1}}{(k-1)!}\,\dd u;
$$
\item
for $i=n$,
$$
\frac{\dd^i}{\dd t^i}\,\mathbb{E}[X_n(t)X_k(1)]=\frac{(1-t)^{k-1}}{(k-1)!};
$$
\item
for $i\in\{n,n+1,\dots,n+k-1\}$,
$$
\frac{\dd^i}{\dd t^i}\,\mathbb{E}[X_n(t)X_k(1)]
=(-1)^{i+n}\,\frac{(1-t)^{k+n-i-1}}{(k+n-i-1)!};
$$
\item
for $i=k+n-1$,
$$
\frac{\dd^i}{\dd t^i}\,\mathbb{E}[X_n(t)X_k(1)]=(-1)^{k-1};
$$
\item
for $i\ge k+n$,
$$
\frac{\dd^i}{\dd t^i}\,\mathbb{E}[X_n(t)X_k(1)]=0.
$$
\end{itemize}
Consequently, at time~$t=1$, for $i\ge n$,
\begin{equation}\label{deriv-cov}
\frac{\dd^i}{\dd t^i}\,\mathbb{E}[X_n(t)X_k(1)]\Big|_{t=1}=(-1)^{k-1}\delta_{i,k+n-1}.
\end{equation}
Now, by differentiating~(\ref{system}), we get for $i\in\{0,1,\dots,n-1\}$,
$$
\sum_{j\in J} \mathbb{E}[X_j(1)X_k(1)] \, P_j^{(i)}(1)=\mathbb{E}[X_{n-i}(1)X_k(1)],\quad k\in J.
$$
In particular, if $i\in (n-J)$, this can be rewritten as
$$
\sum_{j\in J} \mathbb{E}[X_j(1)X_k(1)]\,P_j^{(i)}(1)
=\sum_{j\in J} \delta_{j,n-i}\,\mathbb{E}[X_j(1)X_k(1)],\quad k\in J,
$$
which by identification yields $P_j^{(i)}(1)=\delta_{j,n-i}$.
Similarly, for $i\in\{n,n+1,\dots,2n-1\}$, in view of~(\ref{deriv-cov}), we have
$$
\sum_{j\in J} \mathbb{E}[X_j(1)X_k(1)]\,P_j^{(i)}(1)
=(-1)^{k-1}\delta_{i,k+n-1},\quad k\in J.
$$
In particular, if $i\in\{n,n+1,\dots,2n-1\}\backslash(J+n-1)$, we have
$\delta_{i,k+n-1}=0$ for $k\in J$, and then
$$
\sum_{j\in J} \mathbb{E}[X_j(1)X_k(1)]\,P_j^{(i)}(1)=0,\quad k\in J,
$$
which by identification yields $P_j^{(i)}(1)=0=\delta_{j,n-i}$.
The proof of Theorem~\ref{th-drift} is finished.
\qed
\end{proof}

\subsection{Covariance function}

Let $c_{_Y}$ be the covariance function of $(Y(t))_{t\in[0,1]}$:
$c_{_Y}(s,t)=\mathbb{E}[Y(s)Y(t)]$.
In the next theorem, we supply a representation of $c_{_Y}$ of the form~(\ref{cov-gene}).
%
\begin{theorem}\label{th-decompo-cov}
The covariance function of $(Y(t))_{t\in[0,1]}$ admits the following representation:
for $s,t\in[0,1]$,
$$
c_{_Y}(s,t)=\sum_{k=0}^{n-1} \varphi_k(s\wedge t) \tilde{\psi}_k(s\vee t)
$$
with, for any $k\in\{0,1,\dots,n-1\}$,
$$
\tilde{\psi}_k(t)=\psi_k(t)-\sum_{j\in J}\psi_k^{(n-j)}(1) P_j(t).
$$
Moreover, the functions $\tilde{\psi}_k$, $k\in\{0,1,\dots,n-1\}$, are Hermite
interpolation polynomials such that
$$
\begin{cases}
\tilde{\psi}_k^{(2n)}=0,
\\[1ex]
\tilde{\psi}_k^{(i)}(0)=\delta_{i,k} &\mbox{for }i\in\{0,1,\dots,n-1\},
\\[1ex]
\tilde{\psi}_k^{(i)}(1)=0 &\mbox{for }i\in I.
\end{cases}
$$
\end{theorem}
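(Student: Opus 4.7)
\bigskip

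\noindent\textbf{Proof proposal.} My plan is to start from the distributional representation of Theorem~\ref{th-drift}, namely $Y(t) = X_n(t) - \sum_{j\in J} P_j(t)\,X_j(1)$ in distribution, and simply expand the covariance. Writing
$$
c_{_Y}(s,t)=\mathbb{E}[X_n(s)X_n(t)] - \sum_{j\in J} P_j(t)\,\mathbb{E}[X_n(s)X_j(1)] - \sum_{j\in J} P_j(s)\,\mathbb{E}[X_n(t)X_j(1)] + \sum_{j,k\in J} P_j(s)P_k(t)\,\mathbb{E}[X_j(1)X_k(1)],
$$
I would then invoke the linear system~(\ref{system}) that defines the $P_j$'s: it gives $\sum_{j\in J} P_j(s)\,\mathbb{E}[X_j(1)X_k(1)] = \mathbb{E}[X_n(s)X_k(1)]$, so the last double sum collapses and cancels exactly one of the cross terms, leaving
$$
c_{_Y}(s,t)=c_{_{X_n}}(s,t) - \sum_{j\in J} P_j(t)\,\mathbb{E}[X_n(s)X_j(1)].
$$

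Next, I would use the key fact $X_j = X_n^{(n-j)}$, which yields $\mathbb{E}[X_n(s)X_j(1)] = \bigl(\partial^{n-j}/\partial t^{n-j}\bigr)\,c_{_{X_n}}(s,t)|_{t=1}$. Assuming $s\le t$ and plugging in the decomposition $c_{_{X_n}}(s,t)=\sum_{k=0}^{n-1}\varphi_k(s)\psi_k(t)$ from Section~\ref{sect-iteratedIBM}, differentiation in $t$ acts only on $\psi_k$, and evaluation at $t=1$ gives $\sum_k\varphi_k(s)\psi_k^{(n-j)}(1)$. Substituting this and factoring out $\varphi_k(s)$ yields
$$
c_{_Y}(s,t)=\sum_{k=0}^{n-1}\varphi_k(s)\bigg[\psi_k(t)-\sum_{j\in J}\psi_k^{(n-j)}(1)\,P_j(t)\bigg]=\sum_{k=0}^{n-1}\varphi_k(s)\,\tilde{\psi}_k(t),
$$
which is the announced representation for $s\le t$; the symmetric case $s\ge t$ follows from the symmetry $c_{_Y}(s,t)=c_{_Y}(t,s)$.

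To establish the Hermite-type conditions on the $\tilde{\psi}_k$, I would argue term by term. The identity $\tilde{\psi}_k^{(2n)}=0$ is immediate since $\psi_k$ is a polynomial of degree $k\le n-1$ and each $P_j$ satisfies $P_j^{(2n)}=0$ by Theorem~\ref{th-drift}. For the conditions at $0$, combining $\psi_k^{(i)}(0)=\delta_{i,k}$ with $P_j^{(i)}(0)=0$ for $i\in\{0,\dots,n-1\}$ gives $\tilde{\psi}_k^{(i)}(0)=\delta_{i,k}$. The conditions at $1$ split into the two pieces composing $I$: if $i\in n-J$, write $i=n-j_0$ with $j_0\in J$, so $P_j^{(i)}(1)=\delta_{j,j_0}$ by Theorem~\ref{th-drift}, and the sum $\sum_{j\in J}\psi_k^{(n-j)}(1)P_j^{(i)}(1)$ collapses to $\psi_k^{(n-j_0)}(1)=\psi_k^{(i)}(1)$, which exactly cancels the leading term. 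If instead $i\in\{n,\dots,2n-1\}\setminus(J+n-1)$, then $\psi_k^{(i)}(1)=0$ (degree of $\psi_k$ is too low) and $P_j^{(i)}(1)=0$ for every $j\in J$ (this was the content of the final paragraph of the proof of Theorem~\ref{th-drift}), so $\tilde{\psi}_k^{(i)}(1)=0$ trivially.

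The computation is largely routine; the only subtle point is keeping track of the two disjoint parts of $I$ and extracting from the proof of Theorem~\ref{th-drift} that $P_j^{(i)}(1)=0$ for $i\in\{n,\dots,2n-1\}\setminus(J+n-1)$, which is the main ``reuse'' step. The symmetry check of the representation (the fact that the seemingly asymmetric formula $c_{_Y}(s,t)=c_{_{X_n}}(s,t)-\sum_{j\in J}P_j(t)\,\mathbb{E}[X_n(s)X_j(1)]$ is actually symmetric) is automatic from the derivation since we could equally well have cancelled the other cross term.
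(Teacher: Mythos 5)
Your proposal is correct and follows essentially the same route as the paper: expand the covariance of $X_n(t)-\sum_{j\in J}P_j(t)X_j(1)$, cancel the quadratic term against one cross term via the defining system~(\ref{system}), rewrite $\mathbb{E}[X_n(s)X_j(1)]$ as $\sum_k\varphi_k(s)\psi_k^{(n-j)}(1)$ for $s\le t$, and then read off the Hermite conditions at $0$ and $1$ from those of $\psi_k$ and $P_j$, splitting $I$ into $(n-J)$ and $\{n,\dots,2n-1\}\setminus(J+n-1)$ exactly as the paper does. No gaps.
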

%
\begin{proof}
We decompose $Y(t)$ into the difference
$$
Y(t)=X_n(t)-Z(t) \quad\mbox{ with }\quad Z(t)=\sum_{j\in J} P_j(t)X_j(1).
$$
We have
\begin{align*}
c_{_Y}(s,t)
&
=\mathbb{E}[X_n(s)X_n(t)]+\mathbb{E}[Z(s)Z(t)]-\mathbb{E}[X_n(s)Z(t)]-\mathbb{E}[Z(s)X_n(t)]
\\
&
=c_{_{X_n}}(s,t)+\sum_{j,k\in J} \mathbb{E}[X_j(1)X_k(1)]\,P_j(s)P_k(t)
\\
&
\hphantom{=\,}-\sum_{j\in J} \mathbb{E}[X_n(t)X_j(1)]\,P_j(s)
-\sum_{j\in J} \mathbb{E}[X_n(s)X_j(1)]\,P_j(t).
\end{align*}
By definition~(\ref{system}) of the $P_j$'s, we observe that
\begin{align*}
\lqn{\sum_{j,k\in J} \mathbb{E}[X_j(1)X_k(1)]\,P_j(s)P_k(t)
-\sum_{k\in J} \mathbb{E}[X_n(s)X_k(1)]\,P_k(t)}
&
=\sum_{k\in J} \left(\vphantom{\sum_n^n}\right.\!\sum_{j\in J}
\mathbb{E}[X_j(1)X_k(1)]\,P_j(s)-\mathbb{E}[X_n(s)X_k(1)]
\!\!\left.\vphantom{\sum_n^n}\right)\!P_k(t)
=0.
\end{align*}
Then, we can simplify $c_{_Y}(s,t)$ into
$$
c_{_Y}(s,t)=c_{_{X_n}}(s,t)-\sum_{j\in J} \mathbb{E}[X_n(t)X_j(1)]\,P_j(s).
$$
Since the covariance functions $c_{_Y}$ and $c_{_{X_n}}$ are symmetric,
we also have
$$
c_{_Y}(s,t)=c_{_{X_n}}(s,t)-\sum_{j\in J} \mathbb{E}[X_n(s)X_j(1)]\,P_j(t).
$$
Let us introduce the symmetric polynomial
$$
Q(s,t)=\sum_{j\in J} \mathbb{E}[X_n(s)X_j(1)]\,P_j(t)
=\sum_{j\in J} \mathbb{E}[X_n(t)X_j(1)]\,P_j(s).
$$
It can be expressed by means of the functions $\varphi_k,\psi_k$'s as follows:
$$
Q(s,t)=\sum_{j\in J}\sum_{k=0}^{n-1} \varphi_k(s\wedge t) \psi_k^{(n-j)}(1) P_j(s\vee t).
$$
We can rewrite $c_{_Y}(s,t)$ as
$$
c_{_Y}(s,t)=c_{_{X_n}}(s,t)-Q(s,t)
=\sum_{k=0}^{n-1} \varphi_k(s\wedge t) \!\left[\vphantom{\sum_n^n}\right.\!
\psi_k(s\vee t)-\sum_{j\in J}\psi_k^{(n-j)}(1) P_j(s\vee t)\!\!\left.
\vphantom{\sum_n^n}\right]\!.
$$
and then, for $s,t\in[0,1]$,
$$
c_{_Y}(s,t)=\sum_{k=0}^{n-1} \varphi_k(s\wedge t) \tilde{\psi}_k(s\vee t)
\quad\mbox{with}\quad
\tilde{\psi}_k(t)=\psi_k(t)-\sum_{j\in J}\psi_k^{(n-j)}(1) P_j(t).
$$
We immediately see that $\tilde{\psi}_k$ is a polynomial of degree less than $2n$ such that
$\tilde{\psi}_k^{(i)}(0)=\delta_{i,k}$ for $i\in\{0,1,\dots,n-1\}$
and, since $P_j^{(i)}(1)=\delta_{j,n-i}$,
$$
\tilde{\psi}_k^{(i)}(1)=\psi_k^{(i)}(1)-\sum_{j\in J}\psi_k^{(n-j)}(1) P_j^{(i)}(1)
=\left(1-\ind_{\{i\in n-J\}}\right)\psi_k^{(i)}(1).
$$
We deduce that
$$
\begin{cases}
\tilde{\psi}_k^{(i)}(1)=0 &\mbox{if }i\in (n-J),
\\[.5ex]
\tilde{\psi}_k^{(i)}(1)=\psi_k^{(i)}(1)=0 &\mbox{if } i\in\{n,n+1,\dots,2n-1\}\backslash (J+n-1).
\end{cases}
$$
Then $\tilde{\psi}_k^{(i)}(1)=0$ for any $i\in I$. This ends up the proof of
Theorem~\ref{th-decompo-cov}.
\qed
\end{proof}

\subsection{Boundary value problem}

In this section, we write out the natural boundary value problem which is
associated with the process $(Y(t))_{t\in[0,1]}$.
The following statement is the main connection between the different
boundary value conditions associated with the operator $d^{2n}/dt^{2n}$
and the different bridges of the process $(X_n)_{t\in[0,1]}$ introduced
in this work.
%
\begin{theorem}\label{th-BVP-gene}
Let $u$ be a fixed continuous function on $[0,1]$. The function $v$ defined on $[0,1]$ by
$$
v(t)=\int_0^1 c_{_Y}(s,t)u(s)\,\dd s
$$
is the solution of the boundary value problem
\begin{equation}\label{BVP-bridge}
\begin{cases}
v^{(2n)}=(-1)^n u &\mbox{on }[0,1],
\\
v^{(i)}(0)=0 &\mbox{for } i\in\{0,1,\dots,n-1\},
\\
v^{(i)}(1)=0 &\mbox{for } i\in I.
\end{cases}
\end{equation}
\end{theorem}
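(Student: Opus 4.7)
The plan is to reduce everything to Theorem~\ref{th-BVP-IBM} via the intermediate identity
$c_Y(s,t)=c_{X_n}(s,t)-\sum_{j\in J}\mathbb{E}[X_n(s)X_j(1)]\,P_j(t)$
that is derived inside the proof of Theorem~\ref{th-decompo-cov}. Setting $v_X(t)=\int_0^1 c_{X_n}(s,t)u(s)\,\dd s$ and introducing the constants $c_j=\int_0^1\mathbb{E}[X_n(s)X_j(1)]u(s)\,\dd s$, integration against $u(s)\,\dd s$ immediately yields $v(t)=v_X(t)-\sum_{j\in J}c_j\,P_j(t)$. The merit of this representation is that every property of $v_X$ is delivered by Theorem~\ref{th-BVP-IBM} and every property of each $P_j$ by Theorem~\ref{th-drift}, so the proof reduces to bookkeeping based on the way $I$ is built from $J$.

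First I would treat the differential equation and the conditions at~$0$: since each $P_j$ is a polynomial of degree at most $2n-1$ we have $P_j^{(2n)}\equiv 0$, hence $v^{(2n)}=v_X^{(2n)}=(-1)^n u$; for $i\in\{0,\dots,n-1\}$ both $v_X^{(i)}(0)=0$ (Theorem~\ref{th-BVP-IBM}) and $P_j^{(i)}(0)=0$ (Theorem~\ref{th-drift}) hold, so $v^{(i)}(0)=0$. For the conditions at~$1$, I would split $I$ according to its defining decomposition $I=(n-J)\cup(\{n,\dots,2n-1\}\setminus(J+n-1))$. If $i=n-j_0\in n-J$ with $j_0\in J$, Theorem~\ref{th-drift} gives $P_j^{(i)}(1)=\delta_{j,j_0}$ while formula~(\ref{deriv-v}) applied to $c_{X_n}$, together with $X_n^{(i)}=X_{n-i}$, yields $v_X^{(i)}(1)=\int_0^1\mathbb{E}[X_n(s)X_{n-i}(1)]u(s)\,\dd s=c_{j_0}$, and the two cancel. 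If instead $i\in\{n,\dots,2n-1\}\setminus(J+n-1)$, then $v_X^{(i)}(1)=0$ by Theorem~\ref{th-BVP-IBM}, while $P_j^{(i)}(1)=\delta_{j,n-i}=0$ for every $j\in J$ since $n-i\le 0<j$. In both cases $v^{(i)}(1)=0$.

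The only substantive step is uniqueness, which I would settle by invoking the Hermite interpolation Lemma~\ref{hermite} from the appendix: any polynomial of degree $<2n$ satisfying the $n+n$ homogeneous conditions $w^{(i)}(0)=0$ for $i\in\{0,\dots,n-1\}$ and $w^{(i)}(1)=0$ for $i\in I$ must vanish identically, so subtracting two solutions of~(\ref{BVP-bridge}) yields zero. As a by-product this also clinches the uniqueness left open in Theorem~\ref{th-BVP-IBM}, which corresponds to the particular case $J=\varnothing$, $I=\{n,\dots,2n-1\}$. Beyond the careful tracking of the two halves of $I$ at $t=1$, I do not anticipate any genuine analytic obstacle in carrying out the plan.
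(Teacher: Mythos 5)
Your proposal is correct, and its existence half coincides with the paper's own first step: the paper likewise writes $c_{_Y}(s,t)=c_{_{X_n}}(s,t)-Q(s,t)$ with $Q(s,t)=\sum_{j\in J}\mathbb{E}[X_n(s)X_j(1)]P_j(t)$, deduces $v=w-z$ with $z$ a polynomial of degree less than $2n$, and handles the conditions at $1$ by exactly your case split, computing $\partial^iQ/\partial t^i(s,1)=\ind_{\{i\in(n-J)\}}\mathbb{E}[X_n(s)X_{n-i}(1)]$ so that $z^{(i)}(1)=\ind_{\{i\in(n-J)\}}w^{(i)}(1)$ and hence $v^{(i)}(1)=0$ on both halves of $I$. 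Where you genuinely diverge is uniqueness. The paper integrates by parts to get $\int_0^1 w^{(n)}(t)^2\,\dd t=(-1)^{n+1}\sum_{i=0}^{n-1}(-1)^iw^{(i)}(1)w^{(2n-1-i)}(1)$ and exploits the fact that for each $i\in\{0,\dots,n-1\}$ either $i\in I$ or $2n-1-i\in I$, i.e.\ the partition property $2n-1-I=\{0,1,\dots,2n-1\}\backslash I$; this energy argument is deliberately tailored to the sets $I$ arising from bridges, and it is precisely the observation recorded in Remark~\ref{remark-unique} that seeds the characterization of Section~\ref{sect-general}. You instead note that the difference of two solutions is a polynomial of degree less than $2n$ annihilated by the homogeneous Hermite conditions, and invoke Lemma~\ref{hermite}. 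This is perfectly valid and non-circular (the lemma is proved independently by an $\mathbf{L}\mathbf{U}$-factorization), and it is in fact the route the paper itself takes later for the general problem~(\ref{BVP-gene-bis}), where it remarks that uniqueness "holds for any set of indices $I$". So your argument is more general and shorter, at the price of leaning on the appendix lemma and of not surfacing the structural property of $I$ that the energy method makes visible and that motivates Theorem~\ref{Green-sym}. Your closing observation that this also settles the uniqueness deferred from Theorem~\ref{th-BVP-IBM} (the case $J=\varnothing$, $I=\{n,\dots,2n-1\}$) is accurate.
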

%
\begin{proof}
\noindent$\bullet$ \textsl{First step.}
Recall that $c_{_Y}(s,t)=c_{_{X_n}}(s,t)-Q(s,t)$.
We decompose the function $v$ into the difference $w-z$ where, for $t\in[0,1]$,
$$
w(t)=\int_0^1 c_{_{X_n}}(s,t)u(s)\,\dd s,\quad z(t)=\int_0^1 Q(s,t)u(s)\,\dd s.
$$
We know from Theorem~\ref{th-BVP-IBM} that $w^{(2n)}=(-1)^nu$,
$w^{(i)}(0)=\frac{\partial^i\!Q}{\partial t^i}(s,0)=0$
for $i\in\{0,1,\dots,n-1\}$ and $w^{(i)}(1)=0$ for $i\in\{n,n+1,\dots,2n-1\}$.
Moreover, the function $t\mapsto Q(s,t)$ being a polynomial of
degree less than $2n$, the function $z$ is also a polynomial of degree less than $2n$.
Then $z^{(2n)}=0$, $z^{(i)}(0)=0$ for $i\in\{0,1,\dots,n-1\}$ and
$$
v^{(2n)}=w^{(2n)}-z^{(2n)}=(-1)^nu,\quad
v^{(i)}(0)=w^{(i)}(0)-z^{(i)}(0)=0 \quad\mbox{for }i\in\{0,1,\dots,n-1\}.
$$
On the other hand, we learn from~(\ref{deriv-v}) that, for $i\in\{0,1,\dots,2n-1\}$,
$$
w^{(i)}(1)=\int_0^1 \frac{\partial^i \!c_{_{X_n}}}{\partial t^i}(s,1)\,u(s)\dd s
$$
and then, for $i\in\{0,1,\dots,n-1\}$,
$$
w^{(i)}(1)=\int_0^1 \mathbb{E}[X_n(s)X_{n-i}(1)]\,u(s)\,\dd s.
$$
We also have, for any $i\in\{0,1,\dots,2n-1\}$,
$$
\frac{\partial^i\!Q}{\partial t^i}(s,1)=\sum_{j\in J} \mathbb{E}[X_n(s)X_j(1)]\,P^{(i)}_j(1)
=\ind_{\{i\in (n-J)\}}\mathbb{E}[X_n(s)X_{n-i}(1)].
$$
As a result, we see that
$$
z^{(i)}(1)=\int_0^1 \frac{\partial^i\!Q}{\partial t^i}(s,1)u(s)\,\dd s
=\ind_{\{i\in (n-J)\}}w^{(i)}(1).
$$
This implies that for $i\in (n-J)$, $z^{(i)}(1)=w^{(i)}(1)$ and
for $i\in\{n,n+1,\dots,2n-1\}\backslash(J+n-1)$, $z^{(i)}(1)=0=w^{(i)}(1)$.
Then $z^{(i)}(1)=w^{(i)}(1)$ for any $i\in I$, that is $v^{(i)}(1)=0$.
The function $v$ is a solution of~(\ref{BVP-bridge}).

\noindent$\bullet$ \textsl{Second step.}
We now check the uniqueness of the solution of~(\ref{BVP-bridge}).
Let $v_1$ and $v_2$ be two solutions of $\mathcal{D} v=u$. Then the function $w=v_1-v_2$
satisfies $\mathcal{D} w=0$, $w^{(i)}(0)=0$ for $i\in\{0,1,\dots,n-1\}$
and $w^{(i)}(1)=0$ for $i\in I$. We compute the following ``energy'' integral:
\begin{align*}
\int_0^1 w^{(n)}(t)^2\,\dd t
&
=(-1)^{n+1}\left[\,\sum_{i=0}^{n-1} (-1)^{i} w^{(i)}(t)w^{(2n-1-i)}(t)
\right]_0^1+(-1)^n\int_0^1 w(t)w^{(2n)}(t)\,\dd t
\\
&
=(-1)^{n+1}\sum_{i=0}^{n-1} (-1)^{i} w^{(i)}(1)w^{(2n-1-i)}(1).
\end{align*}
We have constructed the set $I$ in order to have $w^{(i)}(1)w^{(2n-1-i)}(1)=0$
for any $i\in\{0,1,\dots,n-1\}$: when we pick an index $i\in\{0,1,\dots,n-1\}$,
either $i\in I$ or $2n-1-i\in I$. Indeed,
\begin{itemize}
\item[--]
if $i\in  I\cap\{0,1,\dots,n-1\}$, $w^{(i)}=0$;

\item[--]
if $i\in \{0,1,\dots,n-1\}\backslash I$, by observing that
$\{0,1,\dots,n-1\}\backslash I=\{0,1,\dots,n-1\}\backslash (n-J)$,
we have $2n-1-i\in \{n,n+1,\dots,2n-1\}\backslash (J+n-1)$.
Since $\{n,n+1,\dots,2n-1\}\backslash (J+n-1)\subset I$, we see that
$2n-1-i\in I$ and then $w^{(2n-1-i)}=0$.
\end{itemize}
Next, $\int_0^1 w^{(n)}(t)^2\,\dd t=0$ which entails $w^{(n)}=0$, that is,
$w$ is a polynomial of degree less than $n$. Moreover, with the
boundary value conditions at $0$, we obtain $w=0$ or $v_1=v_2$.

The proof of Theorem~\ref{th-BVP-gene} is finished.
\qed
\end{proof}
%
\begin{remark}\label{remark-unique}
We have seen in the above proof that uniqueness is assured as soon as
the boundary value conditions at $1$ satisfy $w^{(i)}(1)w^{(2n-1-i)}(1)=0$
for any $i\in\{0,1,\dots,n-1\}$. These conditions are fulfilled when
the set $I=\{i_1,i_2,\dots,i_n\}\subset\{0,1,\dots,2n-1\}$ is such that
$i_1\in\{0,2n-1\}$, $i_2\in\{1,2n-2\}$, $i_3\in\{2,2n-3\}$, $\dots$,
$i_n\in\{n-1,n\}$. This is equivalent to say that $I$ and $(2n-1-I)$
make up a partition of $\{0,1,\dots,2n-1\}$, or
$$
2n-1-I=\{0,1,\dots,2n-1\}\backslash I.
$$
In this manner, we get $2^n$ different boundary value problems
which correspond to the $2^n$ different bridges we have constructed.
We shall see in Section~\ref{sect-general} that the above identity
concerning the differentiating set $I$
characterizes the possibility for the Green function of the boundary
value problem~(\ref{BVP-bridge}) to be the covariance of a Gaussian process.
\end{remark}

\subsection{Prediction}

Now, we tackle the problem of the prediction for the process $(Y(t))_{t\in[0,1]}$.
%
\begin{theorem}\label{th-prediction}
Fix $t_0\in[0,1]$. The shifted process $(Y(t+t_0))_{t\in[0,1-t_0]}$ admits the
following representation:
$$
(Y(t+t_0))_{t\in[0,1-t_0]}=\left(\tilde{Y}_{t_0}(t)+\sum_{i=0}^{n-1} Q_{i,t_0}(t) Y^{(i)}(t_0)
\right)_{t\in[0,1-t_0]}
$$
where
$$
\tilde{Y}_{t_0}(t)=\tilde{X}_n(t)-\sum_{j\in J}\tilde{P}_{j,t_0}(t)\tilde{X}_j(1-t_0).
$$
The process $\big(\tilde{X}_n(t)\big)_{t\in[0,1-t_0]}$ is a copy of $(X_n(t))_{t\in[0,1-t_0]}$
which is independent of $(X_n(t))_{t\in[0,t_0]}$.

The functions $\tilde{P}_{j,t_0}$, $j\in J$, and $Q_{i,t_0}$, $i\in\{0,1,\dots,n-1\}$,
are Hermite interpolation polynomials on $[0,1-t_0]$ characterized by
$$
\begin{cases}
\tilde{P}^{(2n)}_{j,t_0}=0,
\\[1ex]
\tilde{P}^{(\iota)}_{j,t_0}(0)=0
&\mbox{for } \iota\in\{0,1,\dots,n-1\},
\\[1ex]
\tilde{P}^{(\iota)}_{j,t_0}(1-t_0)=\delta_{\iota,n-j}&\mbox{for }\iota\in I,
\end{cases}
$$
and
$$
\begin{cases}
Q^{(2n)}_{i,t_0}=0,
\\[1ex]
Q^{(\iota)}_{i,t_0}(0)=\delta_{\iota, i}
&\mbox{for } \iota\in\{0,1,\dots,n-1\},
\\[1ex]
Q^{(\iota)}_{i,t_0}(1-t_0)=0 &\mbox{for }\iota\in I.
\end{cases}
$$
Actually, these functions can be expressed by means of
the functions $P_j$, $j\in J$, and $\tilde{\psi}_i$, $i\in\{0,1,\dots,n-1\}$, as follows:
$$
\tilde{P}_{j,t_0}(t)=(1-t_0)^{n-j}P_j\!\left(\frac{t}{1-t_0}\right)\!,
\quad Q_{i,t_0}=(1-t_0)^{i}\tilde{\psi}_i\!\left(\frac{t}{1-t_0}\right)\!.
$$
In other words, the process $\big(\tilde{Y}_{t_0}(t)\big)_{t\in[0,1-t_0]}$ is
a bridge of length $(1-t_0)$ which is independent of $(Y(t))_{t\in[0,t_0]}$, that is
$$
\big(\tilde{Y}_{t_0}(t)\big)_{t\in[0,1-t_0]}\stackrel{d}{=}
\big(\tilde{X}_n(t) \big| \tilde{X}_j(1-t_0)=0,j\in J\big)_{t\in[0,1-t_0]}.
$$
\end{theorem}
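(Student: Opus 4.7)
My plan combines the drift representation of Theorem~\ref{th-drift} with the Markov property of $(X_n(t))_{t\in[0,1]}$ at time~$t_0$ in Taylor form. Setting $\tilde{B}(r)=B(t_0+r)-B(t_0)$ for $r\in[0,1-t_0]$ and $\tilde{X}_j(r)=\int_0^r\frac{(r-s)^{j-1}}{(j-1)!}\,\dd\tilde{B}(s)$, a binomial expansion of $(t+t_0-s)^{j-1}$ on $s\in[0,t_0]$ gives
\[
X_j(t+t_0)=\sum_{k=0}^{j-1}\frac{t^k}{k!}\,X_{j-k}(t_0)+\tilde{X}_j(t),\qquad t\in[0,1-t_0],
\]
where $(\tilde{X}_j)_{j=1}^n$ is an independent copy of $(X_j)_{j=1}^n$ on $[0,1-t_0]$, independent of $\sigma(B(s),s\le t_0)$.

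Starting from the distributional identity $Y(t)\stackrel{d}{=}X_n(t)-\sum_{j\in J}P_j(t)X_j(1)$, I apply the above expansion to both $X_n(t+t_0)$ and $X_j(1)$ (the latter at $t=1-t_0$), and likewise to $Y^{(i)}(t_0)=X_{n-i}(t_0)-\sum_{j\in J}P_j^{(i)}(t_0)X_j(1)$. Matching the coefficients of the linearly independent Gaussian variables $\tilde{X}_j(1-t_0)$, $j\in J$, and $X_{n-i}(t_0)$, $0\le i\le n-1$, on the two sides of the claimed equality reduces the statement to two polynomial identities on $[0,1-t_0]$:
\[
P_j(t+t_0)=\tilde{P}_{j,t_0}(t)+\sum_{i=0}^{n-1}Q_{i,t_0}(t)\,P_j^{(i)}(t_0),\qquad j\in J,
\]
\[
Q_{i,t_0}(t)=\frac{t^i}{i!}-\sum_{\substack{j\in J\\ j\ge n-i}}\frac{(1-t_0)^{j-n+i}}{(j-n+i)!}\,\tilde{P}_{j,t_0}(t),\qquad 0\le i\le n-1.
\]

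I define $\tilde{P}_{j,t_0}$ and $Q_{i,t_0}$ as the unique polynomials of degree less than $2n$ solving the Hermite interpolation problems stated in the theorem (existence and uniqueness from Lemma~\ref{hermite}). Both identities then follow by Hermite uniqueness: each side is a polynomial of degree less than $2n$, and a direct check of derivatives at $t=0$ for orders $0,\dots,n-1$ and at $t=1-t_0$ for orders $\iota\in I$ shows equality, using $\tilde{P}_{j,t_0}^{(\iota)}(0)=0$, $Q_{i,t_0}^{(\iota)}(0)=\delta_{\iota,i}$, $\tilde{P}_{j,t_0}^{(\iota)}(1-t_0)=\delta_{j,n-\iota}$ and $Q_{i,t_0}^{(\iota)}(1-t_0)=0$, together with $P_j^{(i)}(0)=0$ and $P_j^{(\iota)}(1)=\delta_{j,n-\iota}$. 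The scaling formulas $\tilde{P}_{j,t_0}(t)=(1-t_0)^{n-j}P_j(t/(1-t_0))$ and $Q_{i,t_0}(t)=(1-t_0)^{i}\tilde{\psi}_i(t/(1-t_0))$ are obtained from the same uniqueness principle, the rescaled polynomials satisfying the same Hermite systems by a one-line derivative computation. Finally, Theorem~\ref{th-drift} applied to $(\tilde{X}_j)_{j=1}^n$ on $[0,1-t_0]$ identifies $\tilde{Y}_{t_0}(t)=\tilde{X}_n(t)-\sum_{j\in J}\tilde{P}_{j,t_0}(t)\tilde{X}_j(1-t_0)$ with the bridge $(\tilde{X}_n(t)\mid \tilde{X}_j(1-t_0)=0,\,j\in J)$; being measurable with respect to $\tilde{B}$, it is independent of $(Y(t))_{t\in[0,t_0]}$. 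The main obstacle is the bookkeeping in checking the two polynomial identities at $t=1-t_0$, where the set $I$ must be split into its constituent parts $n-J$ and $\{n,\dots,2n-1\}\setminus(J+n-1)$ and each subcase handled separately, which is routine but tedious.
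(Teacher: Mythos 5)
Your proposal is correct and, after the common first step, takes a genuinely different route from the paper's proof. Both arguments start from the same prediction decomposition $X_j(t+t_0)=\tilde{X}_j(t)+\sum_{k=n-j}^{n-1}\frac{t^{j+k-n}}{(j+k-n)!}X_{n-k}(t_0)$ and the resulting expression of $Y(t+t_0)$ as a linear combination of $\tilde{X}_n(t)$, $\tilde{X}_j(1-t_0)$ and $X_{n-k}(t_0)$. From there the paper is constructive: it writes the relations $Y^{(i)}(t_0)=-\sum_{j\in J}P_j^{(i)}(t_0)\tilde{X}_j(1-t_0)+\sum_k a_{ik}X_{n-k}(t_0)$ as a linear system, inverts the matrix $A=(a_{ik})$ to recover the $X_{n-k}(t_0)$, substitutes back to obtain explicit formulas for $Q_{i,t_0}$ and $\tilde{P}_{j,t_0}$ in terms of $B=A^{-1}$, and only then verifies their Hermite interpolation conditions. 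You instead define $\tilde{P}_{j,t_0}$ and $Q_{i,t_0}$ by their Hermite data via Lemma~\ref{hermite}, expand $Y^{(i)}(t_0)$ inside the claimed representation, and match coefficients of $\tilde{X}_j(1-t_0)$ and $X_{n-k}(t_0)$; this reduces the theorem to your two polynomial identities, each of which does follow from Hermite uniqueness (both sides have degree less than $2n$ and the same derivative data at $0$ and $1-t_0$; the endpoint check at $1-t_0$ indeed splits into $\iota\in n-J$ and $\iota\in\{n,\dots,2n-1\}\setminus(J+n-1)$ and works out). Your route avoids entirely the invertibility of $A$, which the paper uses without justification, and builds the Hermite characterization in from the start; the paper's route yields explicit formulas in terms of the $b_{ki}$. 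One small imprecision to repair: the independence of $\tilde{Y}_{t_0}$ from $(Y(t))_{t\in[0,t_0]}$ does not follow merely from $\tilde{Y}_{t_0}$ being $\sigma(\tilde{B})$-measurable, since $Y(s)=X_n(s)-\sum_{j\in J}P_j(s)X_j(1)$ for $s\le t_0$ itself involves $X_j(1)$ and hence $\tilde{B}$; one should argue instead that $\tilde{Y}_{t_0}$ is, by the Gaussian conditioning underlying Theorem~\ref{th-drift}, orthogonal to the variables $\tilde{X}_j(1-t_0)$, $j\in J$, and independent of $\sigma(B(s),s\le t_0)$, while $(Y(s))_{s\in[0,t_0]}$ lies in the Gaussian space generated by these two families.
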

%
\begin{proof}
\noindent$\bullet$ \textsl{First step.}
Fix $t_0\in[0,1]$. We have the well-known decomposition, based on the classical prediction
property of Brownian motion stipulating that
$(B(t+t_0))_{t\in[0,1-t_0]}=\big(\tilde{B}_{t_0}(t)+B(t_0)\big)_{t\in[0,1-t_0]}$
where $\big(\tilde{B}_{t_0}(t)\big)_{t\in[0,1-t_0]}$ is a Brownian motion
independent of $(B(t))_{t\in[0,t_0]}$,
$$
(X_n(t+t_0))_{t\in[0,1-t_0]}=\left(\tilde{X}_n(t)+\sum_{k=0}^{n-1}
\frac{t^k}{k!} \, X_{n-k}(t_0)\right)_{t\in[0,1-t_0]}
$$
with $\tilde{X}_n(t)=\int_0^1 \frac{(t-s)^{n-1}}{(n-1)!}\,\dd \tilde{B}(s)$.
Differentiating this equality $(n-j)$ times, $j\in\{1,2,\dots,n\}$, we obtain
$$
X_j(t+t_0)=\tilde{X}_j(t)+\sum_{k=n-j}^{n-1} \frac{t^{j+k-n}}{(j+k-n)!} \, X_{n-k}(t_0).
$$
Therefore,
\begin{align}
\lqn{Y(t+t_0)}
&
=X_n(t+t_0)-\sum_{j\in J} P_j(t+t_0)X_j(1)
\nonumber\\
&
=\tilde{X}_n(t)+\sum_{k=0}^{n-1} \frac{t^k}{k!} \,X_{n-k}(t_0)
-\sum_{j\in J} P_j(t+t_0) \!\left[\vphantom{\sum_n^n}\right.\!\tilde{X}_j(1-t_0)
+\sum_{k=n-j}^{n-1} \frac{(1-t_0)^{j+k-n}}{(j+k-n)!}\,
X_{n-k}(t_0)\!\!\left.\vphantom{\sum_n^n}\right]
\nonumber\\
&
=\!\left[\vphantom{\sum_n^n}\right.\!\tilde{X}_n(t)-\sum_{j\in J} P_j(t+t_0) \tilde{X}_j(1-t_0)
\!\!\left.\vphantom{\sum_n^n}\right]
+\sum_{k=0}^{n-1} \left[\vphantom{\sum_n^n}\right.\!\frac{t^k}{k!}
-\sum_{j\in J:j\ge n-k} \frac{(1-t_0)^{j+k-n}}{(j+k-n)!}\, P_j(t+t_0)
\!\!\left.\vphantom{\sum_n^n}\right]
X_{n-k}(t_0).
\label{Y-translated}
\end{align}
We are going to express the $X_{n-k}(t_0)$, $k\in\{1,\dots,n\}$, by means
of the $\tilde{X}_j(1-t_0)$, $j\in J$. We have, by differentiating~(\ref{Y-translated})
$i$ times, for $i\in\{0,1,\dots,n-1\}$,
\begin{align*}
Y^{(i)}(t+t_0)
&
=\!\left[\vphantom{\sum_n^n}\right.\!\tilde{X}_{n-i}(t)-
\sum_{j\in J} P^{(i)}_j(t+t_0) \tilde{X}_j(1-t_0)\!\!\left.\vphantom{\sum_n^n}\right]
\\
&
\hphantom{=\,}
+\sum_{k=0}^{n-1} \left[\vphantom{\sum_n^n}\right.\!\frac{t^{k-i}}{(k-i)!}\ind_{\{k\ge i\}}
-\sum_{j\in J:j\ge n-k} \frac{(1-t_0)^{j+k-n}}{(j+k-n)!}\,
P^{(i)}_j(t+t_0)\!\!\left.\vphantom{\sum_n^n}\right]\!
X_{n-k}(t_0).
\end{align*}
For $t=0$, this yields
\begin{equation}\label{Y-i-tzero}
Y^{(i)}(t_0)=-\sum_{j\in J} P^{(i)}_j(t_0) \tilde{X}_j(1-t_0)
+\sum_{k=0}^{n-1} \left[\vphantom{\sum_n^n}\right.\! \delta_{i,k}
-\sum_{j\in J:j\ge n-k} \frac{(1-t_0)^{j+k-n}}{(j+k-n)!}\,
P^{(i)}_j(t_0)\!\!\left.\vphantom{\sum_n^n}\right]
X_{n-k}(t_0).
\end{equation}
Set for $i,k\in\{0,1,\dots,n-1\}$
$$
a_{ik}=\delta_{i,k}-\sum_{j\in J:j\ge n-k}
\frac{(1-t_0)^{j+k-n}}{(j+k-n)!}\, P^{(i)}_j(t_0)
$$
and let us introduce the matrix
$$
A=(a_{ik})_{0\le i,k\le n-1}
$$
together with its inverse matrix
$$
B=A^{-1}=(b_{ik})_{0\le i,k\le n-1}.
$$
The equalities~(\ref{Y-i-tzero}) for $i\in\{0,1,\dots,n-1\}$ read as a
linear system of $(n-1)$ equations and $(n-1)$ unknowns:
$$
\sum_{k=0}^{n-1} a_{ik} X_{n-k}(t_0)=Y^{(i)}(t_0)
+\sum_{j\in J} P^{(i)}_j(t_0)\tilde{X}_j(1-t_0),\quad i\in\{0,1,\dots,n-1\},
$$
which can be rewritten into a matrix form as
$$
A\begin{pmatrix}
X_n(t_0) \\ X_{n-1}(t_0) \\ \vdots \\ X_2(t_0) \\ X_1(t_0)
\end{pmatrix}
=\begin{pmatrix}
Y(t_0) \\ Y'(t_0) \\ \vdots \\ Y^{(n-2)}(t_0) \\ Y^{(n-1)}(t_0)
\end{pmatrix}
+\sum_{j\in J} \tilde{X}_j(1-t_0)
\begin{pmatrix}
P_j(t_0) \\ P'_j(t_0) \\ \vdots \\ P^{(n-2)}_j(t_0) \\ P^{(n-1)}_j(t_0)
\end{pmatrix}\!.
$$
The solution is given by
$$
\begin{pmatrix}
X_n(t_0) \\ X_{n-1}(t_0) \\ \vdots \\ X_2(t_0) \\ X_1(t_0)
\end{pmatrix}
=B\begin{pmatrix}
Y(t_0) \\ Y'(t_0) \\ \vdots \\ Y^{(n-2)}(t_0) \\ Y^{(n-1)}(t_0)
\end{pmatrix}
+\sum_{j\in J} \tilde{X}_j(1-t_0)
B\begin{pmatrix}
P_j(t_0) \\ P'_j(t_0) \\ \vdots \\ P^{(n-2)}_j(t_0) \\ P^{(n-1)}_j(t_0)
\end{pmatrix}
$$
and we see that $X_{n-k}(t_0)$, $k\in\{0,1,\dots,n-1\}$, is of the form
\begin{equation}\label{X-tzero}
X_{n-k}(t_0)=\sum_{i=0}^{n-1} b_{ki} Y^{(i)}(t_0)
+\sum_{j\in J} \left[\,\sum_{i=0}^{n-1} b_{ki}P^{(i)}_j(t_0)\right]\tilde{X}_j(1-t_0).
\end{equation}
Therefore, by plugging~(\ref{X-tzero}) into~(\ref{Y-translated}), we obtain
\begin{align*}
Y(t+t_0)
&
=\tilde{X}_n(t)-\sum_{j\in J} P_j(t+t_0) \tilde{X}_j(1-t_0)
\\
&
\hphantom{=\,}+\sum_{k=0}^{n-1} \left[\vphantom{\sum_n^n}\right.\!\frac{t^{k}}{k!}
-\sum_{j\in J:j\ge n-k} \frac{(1-t_0)^{j+k-n}}{(j+k-n)!}
\, P_j(t+t_0)\!\!\left.\vphantom{\sum_n^n}\right]
\\
&
\hphantom{=\,}\times\!\left[\vphantom{\sum_n^n}\right.\!\sum_{i=0}^{n-1} b_{ki} Y^{(i)}(t_0)
+\sum_{j\in J} \left(\sum_{i=0}^{n-1} b_{ki}P^{(i)}_j(t_0)\right)\tilde{X}_j(1-t_0)
\!\!\left.\vphantom{\sum_n^n}\right]
\\
&
=\tilde{X}_n(t)-\sum_{j\in J} \left[\vphantom{\sum_n^n}\right.\!P_j(t+t_0)-
\sum_{0\le i,k\le n-1} b_{ki} P^{(i)}_j(t_0)
\\
&
\hphantom{=\,} \times\left(\vphantom{\sum_n^n}\right.\!\frac{t^{k}}{k!}
-\sum_{\jmath\in J:\jmath\ge n-k} \frac{(1-t_0)^{\jmath+k-n}}{(\jmath+k-n)!}\, P_{\jmath}(t+t_0)
\!\!\left.\vphantom{\sum_n^n}\right)\!\!\left.\vphantom{\sum_n^n}\right]\!\tilde{X}_j(1-t_0)
\\
&
\hphantom{=\,}
+\sum_{0\le i,k\le n-1} b_{ki}\!\left[\vphantom{\sum_n^n}\right.\!\frac{t^{k}}{k!}
-\sum_{j\in J:j\ge n-k} \frac{(1-t_0)^{j+k-n}}{(j+k-n)!}\, P_j(t+t_0)
\!\!\left.\vphantom{\sum_n^n}\right]\! Y^{(i)}(t_0).
\end{align*}
Finally, $Y(t+t_0)$ can be written as
$$
Y(t+t_0)=\tilde{Y}_{t_0}(t)+\sum_{i=0}^{n-1} Q_{i,t_0}(t) Y^{(i)}(t_0)
$$
where
$$
Q_{i,t_0}(t)=\sum_{k=0}^{n-1} b_{ki}\!\left[\vphantom{\sum_n^n}\right.\!\frac{t^{k}}{k!}
-\sum_{j\in J:j\ge n-k} \frac{(1-t_0)^{j+k-n}}{(j+k-n)!}\, P_j(t+t_0)
\!\!\left.\vphantom{\sum_n^n}\right]
$$
and
$$
\tilde{Y}_{t_0}(t)=\tilde{X}_n(t)-\sum_{j\in J}\tilde{P}_{j,t_0}(t)\tilde{X}_j(1-t_0)
$$
with
\begin{align*}
\tilde{P}_{j,t_0}(t)
&
=P_j(t+t_0)-\sum_{0\le i,k\le n-1} b_{ki} P^{(i)}_j(t_0)
\!\left[\vphantom{\sum_n^n}\right.\!\frac{t^{k}}{k!}
-\sum_{\jmath\in J:\jmath\ge n-k} \frac{(1-t_0)^{\jmath+k-n}}{(\jmath+k-n)!}
\, P_{\jmath}(t+t_0)\!\!\left.\vphantom{\sum_n^n}\right]
\\
&
=P_j(t+t_0)-\sum_{i=0}^{n-1} P^{(i)}_j(t_0)Q_{i,t_0}(t).
\end{align*}

\noindent$\bullet$ \textsl{Second step.}
We easily see that the functions $\tilde{P}_{j,t_0}$ and $Q_{i,t_0}$ are
polynomials of degree less than $2n$. Let us compute now their derivatives at $0$
and $t_0$. First, concerning $Q_{i,t_0}$ we have
$$
Q^{(\iota)}_{i,t_0}(t)=\sum_{k=0}^{n-1} b_{ki}
\!\left[\vphantom{\sum_n^n}\right.\!\frac{t^{k-\iota}}{(k-\iota)!}
\ind_{\{k\ge \iota\}}-\sum_{j\in J:j\ge n-k}
\frac{(1-t_0)^{j+k-n}}{(j+k-n)!}\, P^{(\iota)}_j(t+t_0)
\!\!\left.\vphantom{\sum_n^n}\right]\!.
$$
Choosing $t=0$ and recalling the definition of $a_{\iota k}$ and the fact
that the matrices $(a_{ik})_{0\le i,k\le n}$ and $(b_{ik})_{0\le i,k\le n}$
are inverse, this gives for $\iota\in\{0,1,\dots,n-1\}$,
$$
Q^{(\iota)}_{i,t_0}(0)=\sum_{k=0}^{n-1} b_{ki} \!\left[
\vphantom{\sum_n^n}\right.\!\delta_{\iota, k}
-\sum_{j\in J:j\ge n-k} \frac{(1-t_0)^{j+k-n}}{(j+k-n)!}\,
P^{(\iota)}_j(t_0)\!\!\left.\vphantom{\sum_n^n}\right]
=\sum_{k=0}^{n-1}a_{\iota k} b_{ki}=\delta_{\iota, i}.
$$
Choosing $t=1-t_0$, we have for $\iota\in I$
\begin{equation}\label{Qitzero}
Q^{(\iota)}_{i,t_0}(1-t_0)=\sum_{k=0}^{n-1} b_{ki}\!\left[
\vphantom{\sum_n^n}\right.\!\frac{(1-t_0)^{k-\iota}}{(k-\iota)!}\,
\ind_{\{k\ge \iota\}}-\sum_{j\in J:j\ge n-k} \frac{(1-t_0)^{j+k-n}}{(j+k-n)!}
\, P^{(\iota)}_j(1)\!\!\left.\vphantom{\sum_n^n}\right]\!.
\end{equation}
By Theorem~\ref{th-drift}, we know that $P^{(\iota)}_j(1)=\delta_{j,n-\iota}$
for $\iota\in I$; then
$$
\sum_{j\in J:j\ge n-k} \frac{(1-t_0)^{j+k-n}}{(j+k-n)!}\, P^{(\iota)}_j(1)
=\frac{(1-t_0)^{k-\iota}}{(k-\iota)!}\,\ind_{\{k\ge \iota\}}\ind_{\{\iota\in(n-J)\}}.
$$
Observing that, if $\iota\le k\le n-1$, the conditions $\iota\in(n-J)$ and
$\iota\in I$ are equivalent, we simply have
$$
\sum_{j\in J:j\ge n-k} \frac{(1-t_0)^{j+k-n}}{(j+k-n)!}\, P^{(\iota)}_j(1)
=\frac{(1-t_0)^{k-\iota}}{(k-\iota)!}\,\ind_{\{k\ge \iota\}}
$$
which immediately entails, by~(\ref{Qitzero}),
$$
Q^{(\iota)}_{i,t_0}(1-t_0)=0\quad\mbox{for }\iota\in I.
$$
Next, concerning $\tilde{P}^{(\iota)}_{j,t_0}$, we have
$$
\tilde{P}^{(\iota)}_{j,t_0}(t)
=P^{(\iota)}_j(t+t_0)-\sum_{i=0}^{n-1} P^{(i)}_j(t_0)Q^{(\iota)}_{i,t_0}(t).
$$
Choosing $t=0$, this gives for $\iota\in\{0,1,\dots,n-1\}$,
since $Q^{(\iota)}_{i,t_0}(0)=\delta_{\iota, i}$,
$$
\tilde{P}^{(\iota)}_{j,t_0}(0)
=P^{(\iota)}_j(t_0)-\sum_{i=0}^{n-1} P^{(i)}_j(t_0)Q^{(\iota)}_{i,t_0}(0)=0.
$$
Choosing $t=1-t_0$, we have for $\iota\in I$, since $P^{(\iota)}_j(1)=\delta_{\iota,n-j}$
and $Q^{(\iota)}_{i,t_0}(1-t_0)=0$,
$$
\tilde{P}^{(\iota)}_{j,t_0}(1-t_0)
=P^{(\iota)}_j(1)-\sum_{i=0}^{n-1} P^{(i)}_j(t_0)Q^{(\iota)}_{i,t_0}(1-t_0)
=\delta_{\iota,n-j}.
$$
The polynomials $\tilde{P}_{j,t_0}$ (resp. $Q_{i,t_0}$) enjoy the same
properties than the $P_j$'s (resp. the $\tilde{\psi}_i$'s),
regarding the successive derivatives, they can be deduced from these
latter by a rescaling according as
$$
\tilde{P}_{j,t_0}(t)=(1-t_0)^{n-j}P_j\!\left(\frac{t}{1-t_0}\right)\quad
(\mbox{resp. }Q_{i,t_0}=(1-t_0)^{i}\tilde{\psi}_i\!\left(\frac{t}{1-t_0}\right)).
$$
It is then easy to extract the identity in distribution below, by using
the property of Gaussian conditioning:
$$
\left(\vphantom{\sum_n^n}\right.\!\tilde{X}_n(t)-\sum_{j\in J}\tilde{P}_{j,t_0}(t)
\tilde{X}_j(1-t_0)\!\!\left.\vphantom{\sum_n^n}\right)_{t\in[0,1-t_0]}
\stackrel{d}{=}
\big(\tilde{X}_n(t) \big| \tilde{X}_j(1-t_0)=0,j\in J\big)_{t\in[0,1-t_0]}.
$$
Theorem~\ref{th-prediction} is established.
\qed
\end{proof}

\section{Example: bridges of integrated Brownian motion ($n=2$)}\label{sect-IBM}

Here, we have a look on the particular case where $n=2$ for which
the corresponding process $(X_n(t))_{t\in [0,1]}$ is nothing but
integrated Brownian motion (the so-called Langevin process):
$$
X_2(t)=\int_0^t B(s)\,\dd s.
$$
The underlying Markov process is the so-called Kolmogorov diffusion
$(X_2(t),X_1(t))_{t\in [0,1]}$.
All the associated conditioned processes that will be constructed are related to the equation
$v^{(4)}(t)=u(t)$ with boundary value conditions at time~$0$:
$v(0)=v'(0)=0$. There are four such processes:
\begin{itemize}
\item
$(X_2(t))_{t\in[0,1]}$ (integrated Brownian motion);
\item
$(X_2(t)|X_1(1)=0)_{t\in[0,1]}$ (integrated Brownian bridge);
\item
$(X_2(t)|X_2(1)=0)_{t\in[0,1]}$ (bridge of integrated Brownian motion);
\item
$(X_2(t)|X_1(1)=X_2(1)=0)_{t\in[0,1]}$ (another bridge of integrated Brownian motion).
\end{itemize}
On the other hand, when adding two boundary value conditions at time~$1$ to the
foregoing equation, we find six boundary value problems:
$v(1)=v'(1)=0$, $v(1)=v''(1)=0$, $v(1)=v'''(1)=0$, $v'(1)=v''(1)=0$,
$v'(1)=v'''(1)=0$, $v''(1)=v'''(1)=0$.
Actually, only four of them can be related to some
Gaussian processes--the above listed processes--in the sense of our work
whereas two others cannot be.

For each process, we provide the covariance function, the representation
by means of integrated Brownian motion subject to a random polynomial drift,
the related boundary value conditions at $1$ and the decomposition related
to the prediction problem.
Since the computations are straightforward, we shall omit them and we only
report here the results.

For an account on integrated Brownian motion in relation with the present work,
we refer the reader to, e.g., \cite{drift} and references therein.

\subsection{Integrated Brownian motion}

The process corresponding to the set $J=\varnothing$ is nothing but integrated Brownian motion:
$$
(X_2(t))_{t\in [0,1]}=\left(\int_0^t B(s)\,\dd s\right)_{t\in [0,1]}.
$$
The covariance function is explicitly given by
$$
c(s,t)=\frac16 [s\wedge t]^2\,[3(s\vee t)-s\wedge t].
$$
This process is related to the boundary value conditions at $1$ ($I=\{2,3\}$):
$v''(1)=v'''(1)=0$.
The prediction property can be stated as follows:
$$
(X_2(t+t_0))_{t\in [0,1-t_0]}=\big(\tilde{X}_2(t)+X_2(t_0)+tX_1(t_0)\big)_{t\in [0,1-t_0]}.
$$

\subsection{Integrated Brownian bridge}

The process corresponding to the set $J=\{1\}$ is integrated Brownian bridge:
$$
(Y(t))_{t\in [0,1]}=\left(\int_0^t B(s)\,\dd s\,\Big|B(1)=0\right)_{t\in [0,1]}
=\left(\int_0^t \beta(s)\,\dd s\right)_{t\in [0,1]}
=(X_2(t)|X_1(1)=0)_{t\in [0,1]}.
$$
This process can be represented as
$$
(Y(t))_{t\in [0,1]}=\left(X_2(t)-\frac12\,t^2 X_1(1)\right)_{t\in [0,1]}.
$$
The covariance function is explicitly given by
$$
c(s,t)=\frac16 [s\wedge t]^2\,[3(s\vee t)-s\wedge t]-\frac14\,s^2t^2.
$$
The process $(Y(t))_{t\in [0,1]}$ is related to the boundary value conditions
at $1$ ($I=\{1,3\}$): $v'(1)=v'''(1)=0$.
The prediction property says that
$$
(Y(t+t_0))_{t\in [0,1-t_0]}=\left(\tilde{Y}_{t_0}(t)+Y(t_0)
+\left(t-\frac{t^2}{2(1-t_0)}\right)Y'(t_0)\right)_{t\in [0,1-t_0]}.
$$

\subsection{Bridge of integrated Brownian motion}

The process corresponding to the set $J=\{2\}$ is the bridge of integrated Brownian motion:
$$
(Y(t))_{t\in [0,1]}=\left(\int_0^t B(s)\,\dd s\,\bigg|\int_0^1 B(s)\,\dd s=0\right)_{t\in [0,1]}
=(X_2(t)|X_2(1)=0)_{t\in [0,1]}.
$$
The bridge is understood as the process is pinned at its extremities: $Y(0)=Y(1)=0$.
This process can be represented as
$$
(Y(t))_{t\in [0,1]}=\left(X_2(t)-\frac12\,t^2(3-t) X_2(1)\right)_{t\in [0,1]}.
$$
The covariance function is explicitly given by
$$
c(s,t)=\frac16 [s\wedge t]^2\,[3(s\vee t)-s\wedge t]-\frac{1}{12}\,s^2t^2(3-s)(3-t).
$$
The process $(Y(t))_{t\in [0,1]}$ is related to the boundary value conditions at $1$ ($I=\{0,2\}$):
$v(1)=v''(1)=0$.
The prediction property says that
\begin{align*}
\lqn{(Y(t+t_0))_{t\in [0,1-t_0]}}
=\left(\tilde{Y}_{t_0}(t)+\frac{t^3-3(1-t_0)t^2+2(1-t_0)^3}{2(1-t_0)^3}\,Y(t_0)
+\frac{t^3-3(1-t_0)t^2+2(1-t_0)^2t}{2(1-t_0)^2}\,Y'(t_0)\right)_{t\in [0,1-t_0]}.
\end{align*}

\subsection{Other bridge of integrated Brownian motion}

The process corresponding to the set $J=\{1,2\}$ is another bridge of integrated Brownian motion
(actually of the two-dimensional Kolmogorov diffusion):
$$
(Y(t))_{t\in [0,1]}=\left(\int_0^t B(s)\,\dd s\,\bigg|\int_0^1 B(s)\,\dd s=B(1)=0\right)_{t\in [0,1]}
=(X_2(t)|X_2(1)=X_1(1)=0)_{t\in [0,1]}.
$$
The bridge here is understood as the process is pinned at its extremities
together with its derivatives: $Y(0)=Y'(0)=Y(1)=Y'(1)=0$.
This process can be represented as
$$
(Y(t))_{t\in [0,1]}=\left(X_2(t)-t^2(t-1) X_1(1)-t^2(3-2t) X_2(1)\right)_{t\in [0,1]}.
$$
The covariance function is explicitly given by
$$
c(s,t)=\frac16 [s\wedge t]^2\,[1-s\vee t]^2\,[3(s\vee t)-s\wedge t-2st].
$$
The process $(Y(t))_{t\in [0,1]}$ is related to the boundary value conditions
at $1$ ($I=\{0,1\}$): $v(1)=v'(1)=0$.
The prediction property says that
$$
(Y(t+t_0))_{t\in [0,1-t_0]}=\left(\tilde{Y}_{t_0}(t)+\frac{t^2(t+t_0-1)}{(1-t_0)^3}\,Y(t_0)
+\frac{t^2(3-3t_0-2t)}{(1-t_0)^2}\,Y'(t_0)\right)_{t\in [0,1-t_0]}.
$$

\subsection{Two counterexamples}

\noindent$\bullet$ The solution of the problem associated with the boundary
value conditions $v(1)=v'''(1)=0$ (which corresponds to the set $I_1=\{0,3\}$)
is given by
$$
v(t)=\int_0^1 G_1(s,t) u(s)\,\dd s, \quad t\in[0,1],
$$
where
$$
G_1(s,t)=\frac16 [s\wedge t]^2\,[3(s\vee t)-s\wedge t]+\frac16\,s^2t^2(s-3).
$$

\noindent$\bullet$ The solution of the problem associated with the boundary
value conditions $v'(1)=v''(1)=0$ (which corresponds to the set $I_2=\{1,2\}$),
is given by
$$
v(t)=\int_0^1 G_2(s,t) u(s)\,\dd s, \quad t\in[0,1],
$$
where
$$
G_2(s,t)=\frac16 [s\wedge t]^2\,[3(s\vee t)-s\wedge t]+\frac16\,s^2t^2(t-3).
$$
We can observe the relationships $G_1(s,t)=G_2(t,s)$ and
$I_2=\{0,1,2,3\}\backslash(3-I_1)$.
The Green functions $G_1$ and $G_2$ are not symmetric, so they cannot be
viewed as the covariance functions of any Gaussian process.
In the next section, we give an explanation of these observations.

\section{General boundary value conditions}\label{sect-general}

In this last part, we address the problem of relating the general boundary
value problem
\begin{equation}\label{BVP-gene-bis}
\begin{cases}
v^{(2n)}=(-1)^nu \quad\mbox{on }[0,1], \\
v(0)=v'(0)=\dots=v^{(n-1)}(0)=0, \\
v^{(i_1)}(1)=v^{(i_2)}(1)=\dots=v^{(i_n)}(1)=0,
\end{cases}
\end{equation}
for any indices $i_1,i_2,\dots,i_n$ such that $0\le i_1<i_2<\dots<i_n\le 2n-1$,
to some possible Gaussian process. Set $I=\{i_1,i_2,\dots,i_n\}$.
We have noticed in Theorem~\ref{th-BVP-gene} and Remark~\ref{remark-unique}
that, when $I$ satisfies the relationship $2n-1-I=\{0,1,\dots,2n-1\}\backslash I$,
the system~(\ref{BVP-gene-bis}) admits a unique solution. We proved
this fact by computing an energy integral. Actually, this fact
holds for any set of indices~$I$; see Lemma~\ref{hermite}.

Our aim is to characterize the set of indices $I$ for which
the Green function of~(\ref{BVP-gene-bis}) can be viewed as
the covariance function of a Gaussian process.
A necessary condition for a function of two variables to be the covariance
function of a Gaussian process is that it must be symmetric.
So, we shall characterize the set of indices $I$ for which
the Green function of~(\ref{BVP-gene-bis}) is symmetric and we shall see
that in this case this function is a covariance function.

\subsection{Representation of the solution}

We first write out a representation for the Green function of~(\ref{BVP-gene-bis}).
%
\begin{theorem}\label{th-BVP-gene-bis}
The boundary value problem~(\ref{BVP-gene-bis}) has a unique solution.
The corresponding Green function admits the following representation,
for $s,t\in[0,1]$:
$$
G_{_{\!I}}(s,t)=(-1)^n\ind_{\{s\le t\}}\frac{(t-s)^{2n-1}}{(2n-1)!}
-(-1)^n\sum_{\iota\in I}\frac{(1-s)^{2n-1-\iota}}{(2n-1-\iota)!}\,
R_{{\scriptscriptstyle I},\iota}(t)
$$
where the $R_{{\scriptscriptstyle I},\iota}$, $\iota\in I$,
are Hermite interpolation polynomials satisfying
\begin{equation}\label{condition-hermite-bis}
\begin{cases}
R_{{\scriptscriptstyle I},\iota}^{(2n)}=0,
\\[.5ex]
R_{{\scriptscriptstyle I},\iota}^{(i)}(0)=0 &\mbox{for } i\in\{0,1,\dots,n-1\},
\\[.5ex]
R_{{\scriptscriptstyle I},\iota}^{(i)}(1)=\delta_{\iota,i}
&\mbox{for } i\in I.
\end{cases}
\end{equation}
\end{theorem}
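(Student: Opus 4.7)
The plan is to build the solution as a particular solution of the ODE (supplied by an iterated indefinite integral) plus a polynomial correction that repairs the boundary values at $1$, and then read off the Green function.

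First I would start from the particular solution $v_p(t)=(-1)^n\int_0^t \frac{(t-s)^{2n-1}}{(2n-1)!}\,u(s)\,\dd s$. A direct computation (differentiating under the integral sign as in~(\ref{deriv-v})) shows that $v_p^{(2n)}=(-1)^n u$, that $v_p^{(i)}(0)=0$ for every $i\in\{0,1,\dots,2n-1\}$ (so in particular the $n$ boundary conditions at $0$ are already satisfied), and that for $i\le 2n-1$,
$$
v_p^{(i)}(1)=(-1)^n\int_0^1 \frac{(1-s)^{2n-1-i}}{(2n-1-i)!}\,u(s)\,\dd s.
$$
In general these values do not vanish for $i\in I$, so a correction is needed.

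Next I would invoke Lemma~\ref{hermite} (the Hermite interpolation lemma announced in the appendix) to guarantee the existence and uniqueness of polynomials $R_{{\scriptscriptstyle I},\iota}$ of degree at most $2n-1$ satisfying (\ref{condition-hermite-bis}). Because each $R_{{\scriptscriptstyle I},\iota}$ vanishes to order $n$ at $0$, subtracting a linear combination of them does not spoil the boundary conditions at $0$ nor the ODE. I would then set
$$
v(t)=v_p(t)-(-1)^n\sum_{\iota\in I}\!\left(\int_0^1 \frac{(1-s)^{2n-1-\iota}}{(2n-1-\iota)!}\,u(s)\,\dd s\right)\!R_{{\scriptscriptstyle I},\iota}(t).
$$
Using the interpolation property $R_{{\scriptscriptstyle I},\iota}^{(i)}(1)=\delta_{\iota,i}$ for $i\in I$, the $i$-th derivative at $1$ of the subtracted term matches exactly $v_p^{(i)}(1)$ for each $i\in I$, so $v^{(i)}(1)=0$ on $I$. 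Rewriting the initial integral as $\int_0^1\ind_{\{s\le t\}}\frac{(t-s)^{2n-1}}{(2n-1)!}u(s)\,\dd s$ gives $v(t)=\int_0^1 G_{_{\!I}}(s,t)u(s)\,\dd s$ with exactly the formula claimed.

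For uniqueness, the difference $w$ of two solutions satisfies $w^{(2n)}=0$, hence $w$ is a polynomial of degree $\le 2n-1$ annihilated by $n$ Hermite conditions at $0$ and $n$ at $1$; Lemma~\ref{hermite} forces $w\equiv 0$. The main (and only genuine) obstacle is thus the general Hermite interpolation result: one cannot, as was done in Theorem~\ref{th-BVP-gene}, reduce it to the energy identity because that argument relied on the symmetry $2n-1-I=\{0,\dots,2n-1\}\setminus I$, which is \emph{not} assumed here. Everything else is a bookkeeping check of the derivatives of $v_p$ and of the $R_{{\scriptscriptstyle I},\iota}$.
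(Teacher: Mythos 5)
Your proposal is correct and follows essentially the same route as the paper: write the solution as the particular solution $(-1)^n\int_0^t\frac{(t-s)^{2n-1}}{(2n-1)!}u(s)\,\dd s$ plus a polynomial correction expanded in the basis $R_{{\scriptscriptstyle I},\iota}$ supplied by Lemma~\ref{hermite}, which also settles uniqueness. Your closing observation that the energy argument of Theorem~\ref{th-BVP-gene} is unavailable here and that Lemma~\ref{hermite} is the genuine content is exactly the point the paper itself makes.
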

%
\begin{remark}
The conditions~(\ref{condition-hermite-bis}) characterize the
polynomials $R_{{\scriptscriptstyle I},\iota}$, $\iota\in I$.
We prove this fact in Lemma~\ref{hermite} in the appendix.
\end{remark}
%

\begin{proof}
Let us introduce the functions $v_1$ and $v_2$ defined, for any $t\in[0,1]$, by
$$
v_1(t)=(-1)^n\int_0^t \frac{(t-s)^{2n-1}}{(2n-1)!}\,u(s)\,\dd s\quad
\mbox{and}\quad v_2(t)=v(t)-v_1(t).
$$
We plainly have $v_1^{(2n)}=(-1)^nu$ and
$v_1(0)=v_1'(0)=\dots=v_1^{(n-1)}(0)=0$. Therefore, the function
$v$ solves the system~(\ref{BVP-gene-bis}) if and only if the function
$v_2$ satisfies
\begin{equation}\label{BVP-v2}
\begin{cases}
v_2^{(2n)}=0\quad\mbox{on }[0,1], &
\\[1ex]
v_2^{(i)}(0)=0 & \mbox{for } i\in\{0,1,\dots,n-1\},
\\
\displaystyle v_2^{(i)}(1)=(-1)^{n-1}\int_0^1 \frac{(1-s)^{2n-1-i}}{(2n-1-i)!}
\,u(s)\,\dd s  & \mbox{for } i\in I.
\end{cases}
\end{equation}
Referring to Lemma~\ref{hermite}, the conditions~(\ref{BVP-v2}) mean
that $v_2$ is a Hermite interpolation polynomial
which can be expressed as a linear combination of the $R_{{\scriptscriptstyle I},\iota}$,
$\iota\in I$, defined in Theorem~\ref{th-BVP-gene-bis} as follows:
$$
v_2(t)=\sum_{\iota\in I} v_2^{(i)}(1) R_{{\scriptscriptstyle I},\iota}(t)
=(-1)^{n-1} \int_0^1 \left[\,\sum_{\iota\in I} \frac{(1-s)^{2n-1-\iota}}{(2n-1-\iota)!}\,
R_{{\scriptscriptstyle I},\iota}(t)\right] \!u(s)\,\dd s.
$$
Consequently, the boundary value problem~(\ref{BVP-gene-bis}) admits a unique solution
$v$ which writes
$$
v(t)=v_1(t)+v_2(t)=\int_0^1 G_{_{\!I}}(s,t)u(s)\,\dd s,\quad t\in[0,1],
$$
where $G_{_{\!I}}(s,t)$ is defined in Theorem~\ref{th-BVP-gene-bis}.
The proof is finished.
\qed
\end{proof}

We now state two intermediate results which will be used in the proof of
Theorem~\ref{Green-sym}.
%
\begin{proposition}\label{different}
Let $I_1$ and $I_2$ be two subsets of $\{0,1,\dots,2n-1\}$ with cardinality $n$.
If the sets $I_1$ and $I_2$ are different, then the corresponding Green functions
$G_{_{\!I_1}}$ and $G_{_{\!I_2}}$ are different.
\end{proposition}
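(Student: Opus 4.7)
\textbf{Proof plan for Proposition~\ref{different}.}

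The strategy is a contradiction argument. Assume $G_{_{\!I_1}}=G_{_{\!I_2}}$ as functions on $[0,1]^2$. Since $I_1\ne I_2$ but $|I_1|=|I_2|=n$, pick an index $i_0\in I_2\setminus I_1$. For any continuous $u$ on $[0,1]$, the function $v(t)=\int_0^1 G_{_{\!I_1}}(s,t)u(s)\,\dd s$ is the unique solution of BVP($I_1$) by Theorem~\ref{th-BVP-gene-bis}; but since $G_{_{\!I_1}}=G_{_{\!I_2}}$ this same $v$ also solves BVP($I_2$), so in particular $v^{(i_0)}(1)=0$ for every continuous $u$. I will derive a contradiction by producing $u$ with $v^{(i_0)}(1)\ne 0$.

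To that end, I compute $\partial^{i_0}G_{_{\!I_1}}/\partial t^{i_0}$ at $t=1$ using the explicit representation of Theorem~\ref{th-BVP-gene-bis}. Differentiating $i_0$ times in $t$ the kernel $(-1)^n\ind_{\{s\le t\}}(t-s)^{2n-1}/(2n-1)!$ gives $(-1)^n\ind_{\{s\le t\}}(t-s)^{2n-1-i_0}/(2n-1-i_0)!$; evaluated at $t=1$ this equals $(-1)^n(1-s)^{2n-1-i_0}/(2n-1-i_0)!$. Differentiating the polynomial part $\sum_{\iota\in I_1}(1-s)^{2n-1-\iota}/(2n-1-\iota)!\,R_{{\scriptscriptstyle I_1},\iota}(t)$ and evaluating at $t=1$ uses $R_{{\scriptscriptstyle I_1},\iota}^{(i_0)}(1)$, which is merely some real number. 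Putting this together,
$$
\frac{\partial^{i_0}G_{_{\!I_1}}}{\partial t^{i_0}}(s,1)
=(-1)^n\left[\frac{(1-s)^{2n-1-i_0}}{(2n-1-i_0)!}-\sum_{\iota\in I_1}\frac{(1-s)^{2n-1-\iota}}{(2n-1-\iota)!}\,R_{{\scriptscriptstyle I_1},\iota}^{(i_0)}(1)\right]\!.
$$
This is a polynomial in $(1-s)$ that contains the monomial $(1-s)^{2n-1-i_0}$ with the nonzero coefficient $(-1)^n/(2n-1-i_0)!$, while all monomials coming from the sum have exponents $2n-1-\iota$ with $\iota\in I_1$, and these are all different from $2n-1-i_0$ because $i_0\notin I_1$. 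Hence $s\mapsto\partial^{i_0}G_{_{\!I_1}}/\partial t^{i_0}(s,1)$ is a nonzero continuous function on $[0,1]$.

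Since $v^{(i_0)}(1)=\int_0^1 (\partial^{i_0}G_{_{\!I_1}}/\partial t^{i_0})(s,1)\,u(s)\,\dd s$ and the kernel is a nonzero continuous function of $s$, one can choose $u$ (for instance $u(s)=(\partial^{i_0}G_{_{\!I_1}}/\partial t^{i_0})(s,1)$ itself) such that $v^{(i_0)}(1)\ne 0$. This contradicts the conclusion drawn from the hypothesis $G_{_{\!I_1}}=G_{_{\!I_2}}$, so the two Green functions must be different. The only nontrivial step is the linear independence argument isolating the coefficient of $(1-s)^{2n-1-i_0}$; this is immediate once one notices that $i_0\notin I_1$ forces this exponent not to appear in the polynomial part, which is the only place where this argument could fail.
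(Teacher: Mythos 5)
Your proof is correct and follows essentially the same route as the paper's: pick $i_0\in I_2\setminus I_1$, deduce from the hypothetical equality of Green functions that $\partial^{i_0}G_{_{\!I_1}}/\partial t^{i_0}(\cdot,1)$ must vanish identically, and contradict this by noting that the monomial $(1-s)^{2n-1-i_0}$ appears with nonzero coefficient and cannot be produced by the sum over $\iota\in I_1$ since $i_0\notin I_1$. Your version merely makes explicit the choice of test function $u$ that the paper leaves implicit.
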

%
\begin{proof}
Suppose that $I_1\neq I_2$. If we had $G_{_{\!I_1}}=G_{_{\!I_2}}$,
for any continuous function $u$, the function $v$ defined on $[0,1]$ by
$$
v(t)=\int_0^1 G_{_{\!I_1}}(s,t)u(s)\,\mathrm{d}s=\int_0^1 G_{_{\!I_2}}(s,t)u(s)\,\mathrm{d}s
$$
would be a solution of
the equation $v^{(2n)}=(-1)^nu$ satisfying $v^{(i)}(0)=0$ for $i\in\{0,1,\dots,n-1\}$
and $v^{(i)}(1)=0$ for $i\in I_1\cup I_2$. Since $I_1\neq I_2$ and since $I_1,I_2$
have the same cardinality, there exists an index $i_0$ which belongs
to $I_2\backslash I_1$. For this $i_0$, we would have
$(\partial^{i_0}G_{_{\!I_1}}/\partial t^{i_0})(s,1)=0$ for all $s\in(0,1)$,
or equivalently,
$$
\sum_{\iota\in I_1} \frac{(1-s)^{2n-1-\iota}}{(2n-1-\iota)!}\,
R_{{\scriptscriptstyle I_1},\iota}^{(i_0)}(1)=\frac{(1-s)^{2n-1-i_0}}{(2n-1-i_0)!}.
$$
This is impossible since the exponent $(2n-1-i_0)$ does not appear in
the polynomial on the left-hand side of the foregoing equality.
As a result, $G_{_{\!I_1}}\neq G_{_{\!I_2}}$.
\qed
\end{proof}
%
\begin{proposition}\label{sym}
Let $I_1$ and $I_2$ be two subsets of $\{0,1,\dots,2n-1\}$ with cardinality $n$.
The relationship $G_{_{\!I_1}}(s,t)=G_{_{\!I_2}}(t,s)$ holds for any $s,t\in[0,1]$
(in other words, the integral operators with kernels $G_{_{\!I_1}}$ and $G_{_{\!I_2}}$ are dual)
if and only if the sets $I_1$ and $I_2$ are linked by
$I_2=\{0,1,\dots,2n-1\}\backslash (2n-1-I_1)$.
\end{proposition}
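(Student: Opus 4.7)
The plan is to use Lagrange's identity—$2n$-fold integration by parts for the operator $\dd^{2n}/\dd t^{2n}$—to translate the kernel symmetry $G_{_{\!I_1}}(s,t)=G_{_{\!I_2}}(t,s)$ into a vanishing condition on boundary terms at $t=1$, and then invoke Proposition~\ref{different} to convert equality of Green functions into equality of the underlying index sets.

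First I would introduce the candidate set $\tilde{I}_2:=\{0,1,\dots,2n-1\}\setminus(2n-1-I_1)$ and establish the identity $G_{_{\!I_1}}(s,t)=G_{_{\!\tilde{I}_2}}(t,s)$. For arbitrary continuous $u_1,u_2$ on $[0,1]$, set
$$
v_1(t)=\int_0^1 G_{_{\!I_1}}(s,t)u_1(s)\,\dd s,\qquad
\tilde{v}_2(t)=\int_0^1 G_{_{\!\tilde{I}_2}}(s,t)u_2(s)\,\dd s,
$$
which by Theorem~\ref{th-BVP-gene-bis} solve the respective boundary value problems with forcings $u_1,u_2$. Lagrange's identity yields
$$
\int_0^1\!\big[v_1\,\tilde{v}_2^{(2n)}-v_1^{(2n)}\,\tilde{v}_2\big]\,\dd t
=\sum_{k=0}^{2n-1}(-1)^k\big[v_1^{(k)}\,\tilde{v}_2^{(2n-1-k)}\big]_0^1.
$$
I would show every boundary term on the right vanishes: at $t=0$, for each $k$ either $k\le n-1$ and $v_1^{(k)}(0)=0$, or $k\ge n$ and $2n-1-k\le n-1$ and $\tilde{v}_2^{(2n-1-k)}(0)=0$; at $t=1$, the defining relation for $\tilde{I}_2$ is equivalent to the equivalence $2n-1-k\in\tilde{I}_2\Longleftrightarrow k\notin I_1$, so each summand contains a factor that is zero. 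Substituting $v_1^{(2n)}=(-1)^nu_1$ and $\tilde{v}_2^{(2n)}=(-1)^nu_2$ and invoking Fubini, this collapses to
$$
\int_0^1\!\!\int_0^1\!G_{_{\!I_1}}(s,t)u_1(s)u_2(t)\,\dd s\,\dd t
=\int_0^1\!\!\int_0^1\!G_{_{\!\tilde{I}_2}}(t,s)u_1(s)u_2(t)\,\dd s\,\dd t,
$$
and the arbitrariness of $u_1,u_2$ (with continuity of both kernels on each triangle) gives $G_{_{\!I_1}}(s,t)=G_{_{\!\tilde{I}_2}}(t,s)$ for all $s,t\in[0,1]$.

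Sufficiency of the set identity is then immediate: if $I_2=\tilde{I}_2$, the identity just proved is precisely $G_{_{\!I_1}}(s,t)=G_{_{\!I_2}}(t,s)$. For necessity, combining the hypothesis $G_{_{\!I_1}}(s,t)=G_{_{\!I_2}}(t,s)$ with the identity of the previous step forces $G_{_{\!I_2}}=G_{_{\!\tilde{I}_2}}$ as functions on $[0,1]^2$, and Proposition~\ref{different} then yields $I_2=\tilde{I}_2$, which is the desired set-theoretic relation.

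The main technical point—and the only step that is not formal—is the bookkeeping of the boundary terms at $t=1$: one must unwind the definition of $\tilde{I}_2$ to check that the equivalence $2n-1-k\in\tilde{I}_2\Longleftrightarrow k\notin I_1$ holds for every $k\in\{0,\dots,2n-1\}$, and that this is precisely the combinatorial condition making every summand of the Lagrange sum vanish. Once this equivalence is in hand, everything reduces to integration by parts plus a single appeal to Proposition~\ref{different}.
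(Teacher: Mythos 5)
Your proof is correct, and it takes a genuinely different route from the paper's. The paper argues entirely at the level of the explicit representation of $G_{_{\!I}}$ from Theorem~\ref{th-BVP-gene-bis}: it forms the polynomial $S(s,t)=(-1)^n[G_{_{\!I_1}}(s,t)-G_{_{\!I_2}}(t,s)]$ (the indicator terms combine into $(t-s)^{2n-1}/(2n-1)!$), computes the mixed partial derivatives of $S$ at $t=1$, at $(1,0)$ and at $(0,0)$, and applies the Hermite-interpolation uniqueness Lemma~\ref{hermite} twice (once in each variable) to conclude $S\equiv 0$ exactly when $I_2=\{0,1,\dots,2n-1\}\backslash(2n-1-I_1)$; the ``only if'' direction is read off directly from the nonvanishing of $\partial^{i_1}\!S/\partial t^{i_1}(\cdot,1)$ for an index $i_1\in I_1\cap(2n-1-I_2)$, without any appeal to Proposition~\ref{different}. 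You instead establish the key identity $G_{_{\!I_1}}(s,t)=G_{_{\!\tilde I_2}}(t,s)$ abstractly, by pairing two solutions through Lagrange's identity and checking that the boundary bilinear form vanishes precisely because $2n-1-k\in\tilde I_2\Longleftrightarrow k\notin I_1$ (and, at $0$, because one of $k$, $2n-1-k$ is always $\le n-1$); you then delegate the necessity direction to Proposition~\ref{different}, which is legitimately available since it is proved beforehand. Both arguments are complete. Yours is shorter and more conceptual --- it makes transparent why the complement of the reflected set is the adjoint boundary condition, which is exactly the duality alluded to in the statement --- at the modest cost of the standard density step (a continuous kernel annihilating all products $u_1\otimes u_2$ is zero, which holds here since each $G_{_{\!I}}$ is continuous on the square) and of the extra appeal to Proposition~\ref{different}; the paper's computation is heavier but stays entirely within explicit polynomial identities.
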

%
\begin{proof}
We have, for any $s,t\in[0,1]$,
$$
(-1)^n[G_{_{\!I_1}}(s,t)-G_{_{\!I_2}}(t,s)]=\frac{(t-s)^{2n-1}}{(2n-1)!}
-\sum_{\iota\in I_1}\frac{(1-s)^{2n-1-\iota}}{(2n-1-\iota)!}
R_{{\scriptscriptstyle I_1},\iota}(t)
+\sum_{\iota\in I_2}\frac{(1-t)^{2n-1-\iota}}{(2n-1-\iota)!}
R_{{\scriptscriptstyle I_2},\iota}(s).
$$
Set, for any $s,t\in[0,1]$,
$$
S(s,t)=(-1)^n[G_{_{\!I_1}}(s,t)-G_{_{\!I_2}}(t,s)].
$$
The polynomial $S$ has a degree less than $2n$ with respect to each variable $s$ and $t$
and satisfy
\begin{align*}
\frac{\partial^{i_1+i_2}S}{\partial s^{i_2}\partial t^{i_1}}\,(s,t)
&
=(-1)^{i_2}\ind_{\{i_1+i_2\le 2n-1\}}\,\frac{(t-s)^{2n-1-i_1-i_2}}{(2n-1-i_1-i_2)!}
\\
&\hphantom{=\;}
-(-1)^{i_2}\sum_{\iota\in I_1}\ind_{\{i_2\le 2n-1-\iota\}}
\,\frac{(1-s)^{2n-1-\iota-i_2}}{(2n-1-\iota-i_2)!}
R_{{\scriptscriptstyle I_1},\iota}^{(i_1)}(t)
\\
&\hphantom{=\;}
+(-1)^{i_1}\sum_{\iota\in I_2}\ind_{\{i_1\le 2n-1-\iota\}}
\,\frac{(1-t)^{2n-1-\iota-i_1}}{(2n-1-\iota-i_1)!}
R_{{\scriptscriptstyle I_2},\iota}^{(i_2)}(s).
\end{align*}
In particular,
\begin{itemize}
\item
for $t=1$ and $i_1\in I_1$, $i_2\in \{0,1,\dots,2n-1\}$,
\begin{align}
\frac{\partial^{i_1+i_2}S}{\partial s^{i_2}\partial t^{i_1}}\,(s,1)
&
=(-1)^{i_2}\ind_{\{i_1+i_2\le 2n-1\}}\,\frac{(1-s)^{2n-1-i_1-i_2}}{(2n-1-i_1-i_2)!}
\nonumber\\
&\hphantom{=\;}
-(-1)^{i_2}\sum_{\iota\in I_1} \ind_{\{i_2\le 2n-1-\iota\}}
\,\frac{(1-s)^{2n-1-\iota-i_2}}{(2n-1-\iota-i_2)!}\,\delta_{\iota,i_1}
\nonumber\\
&\hphantom{=\;}
+(-1)^{i_1}\sum_{\iota\in I_2} \delta_{\iota,2n-1-i_1}\;R_{{\scriptscriptstyle I_2},\iota}^{(i_2)}(s)
\nonumber\\
&=(-1)^{i_1}\ind_{\{i_1\in (2n-1-I_2)\}} R_{{\scriptscriptstyle I_2},\iota}^{(i_2)}(s);
\label{partial1}
\end{align}

\item
for $(s,t)=(1,0)$ and $i_1\in \{0,1,\dots,n-1\}$, $i_2\in I_2$,
\begin{align}
\frac{\partial^{i_1+i_2}S}{\partial s^{i_2}\partial t^{i_1}}\,(1,0)
=(-1)^{i_1+1}\,\frac{\ind_{\{i_1+i_2\le 2n-1\}}}{(2n-1-i_1-i_2)!}
+(-1)^{i_1}\sum_{\iota\in I_2}\frac{\ind_{\{i_1\le 2n-1-\iota\}}}
{(2n-1-\iota-i_1)!}\,\delta_{\iota,i_2}
=0;
\label{partial2}
\end{align}

\item
for $(s,t)=(0,0)$ and $i_1,i_2\in \{0,1,\dots,n-1\}$,
\begin{align}
\frac{\partial^{i_1+i_2}S}{\partial s^{i_2}\partial t^{i_1}}\,(0,0)
=(-1)^{i_2}\,\delta_{i_1+i_2, 2n-1}=0.
\label{partial3}
\end{align}
\end{itemize}

Now we are able to establish the statement of Proposition~\ref{sym}.
\begin{itemize}
\item
If $I_2\neq\{0,1,\dots,2n-1\}\backslash (2n-1-I_1)$, there exists
$i_1\in I_1$ such that $i_1\in (2n-1-I_2)$ and then, in view of~(\ref{partial1}),
$$
\frac{\partial^{i_1}\!S}{\partial t^{i_1}}\,(s,1)\neq 0.
$$
The polynomial $S$ cannot be null, that is, there exist $s,t\in[0,1]$
such that $G_{_{\!I_1}}(s,t)\neq G_{_{\!I_2}}(t,s)$.
\item
If $I_2=\{0,1,\dots,2n-1\}\backslash (2n-1-I_1)$, for any $i_1\in I_1$,
we have $i_1\notin (2n-1-I_2)$ and then, in view of~(\ref{partial1}),
$$
\frac{\partial^{i_1}\!S}{\partial t^{i_1}}\,(s,1)=0\quad
\mbox{for } i_1\in I_1.
$$
Put, for any $i_1\in \{0,1,\dots,n-1\}$,
$\displaystyle\tilde{S}_{i_1}(s)=\frac{\partial^{i_1}\!S}{\partial t^{i_1}}\,(s,0)$.
The polynomial $\tilde{S}_{i_1}$ has a degree less than $2n$.
By~(\ref{partial2}) and~(\ref{partial3}), we have
$$
\begin{cases}
\tilde{S}_{i_1}^{(i_2)}(0)=0 &\mbox{for }i_2\in \{0,1,\dots,n-1\},
\\[1ex]
\tilde{S}_{i_1}^{(i_2)}(1)=0 &\mbox{for } i_2\in I_2,
\end{cases}
$$
from which we deduce, invoking Lemma~\ref{hermite}, that all the polynomials $\tilde{S}_{i_1}$,
$i_1\in \{0,1,\dots,n-1\}$, are null. Finally, the polynomial $S$
has a degree less than to $2n$ with respect to $t$ and satisfies
$$
\begin{cases}
\displaystyle\frac{\partial^{i_1}\!S}{\partial t^{i_1}}\,(s,0)=0
&\mbox{for } i_1\in \{0,1,\dots,n-1\},
\\[1.5ex]
\displaystyle\frac{\partial^{i_1}\!S}{\partial t^{i_1}}\,(s,1)=0
&\mbox{for } i_1\in I_1.
\end{cases}
$$
We can assert, by Lemma~\ref{hermite}, that $S$ is the null-polynomial.
\end{itemize}
The proof of Proposition~\ref{sym} is finished.
\qed
\end{proof}

A necessary condition for $G_{_{\!I}}$ to be the covariance function of a Gaussian process is
that it must be symmetric: $G_{_{\!I}}(s,t)=G_{_{\!I}}(t,s)$ for any $s,t\in[0,1]$.
The theorem below asserts that if the set of indices $I$ is not of the form
displayed in the preamble of Section~\ref{sect-bridges}, that is
$I\neq \{0,1,\dots,\linebreak 2n-1\}\backslash (2n-1-I)$, the Green
function $G_{_{\!I}}$ is not symmetric and consequently
this function can not be viewed as a covariance function, that is we can not
relate the boundary value problem~(\ref{BVP-gene-bis}) to any Gaussian process.

%
\begin{theorem}\label{Green-sym}
The Green function $G_{_{\!I}}$ is symmetric (and it corresponds to a covariance function)
if and only if the set of indices
$I$ satisfies $2n-1-I=\{0,1,\dots,2n-1\}\backslash I$.
\end{theorem}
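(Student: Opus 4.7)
\bigskip

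\noindent\textbf{Proof plan.}

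The strategy is to reduce both directions of the equivalence to the two preparatory results already in hand, Propositions~\ref{different} and~\ref{sym}, and then to exhibit, in the ``symmetric'' case, an explicit Gaussian process whose covariance function must coincide with $G_{_{\!I}}$.

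For the forward implication, assume that $G_{_{\!I}}(s,t)=G_{_{\!I}}(t,s)$ for all $s,t\in[0,1]$. Let $I'=\{0,1,\dots,2n-1\}\setminus(2n-1-I)$, which is again a subset of $\{0,1,\dots,2n-1\}$ of cardinality~$n$. Proposition~\ref{sym} (applied with $I_1=I$ and $I_2=I'$) yields $G_{_{\!I}}(s,t)=G_{_{\!I'}}(t,s)$, and combining this with the assumed symmetry of $G_{_{\!I}}$ gives $G_{_{\!I'}}(t,s)=G_{_{\!I}}(t,s)$ for all $s,t$. Proposition~\ref{different} then forces $I=I'$, that is $2n-1-I=\{0,1,\dots,2n-1\}\setminus I$.

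For the reverse implication and the identification with a covariance function, assume that $I$ satisfies $2n-1-I=\{0,1,\dots,2n-1\}\setminus I$, so that $I=\{0,1,\dots,2n-1\}\setminus(2n-1-I)$. Applying Proposition~\ref{sym} with $I_1=I_2=I$ immediately gives $G_{_{\!I}}(s,t)=G_{_{\!I}}(t,s)$, hence symmetry. To exhibit a Gaussian process whose covariance function is $G_{_{\!I}}$, I would form $J=(n-I)\cap\{1,2,\dots,n\}$ (as in Section~\ref{sect-bridges}) and consider the bridge $(Y(t))_{t\in[0,1]}=(X_n(t)\,|\,X_j(1)=0,\,j\in J)$. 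Under the standing assumption on $I$, the set constructed from $J$ via $(n-J)\cup[\{n,\dots,2n-1\}\setminus(J+n-1)]$ is exactly $I$; thus Theorem~\ref{th-BVP-gene} says that $t\mapsto\int_0^1 c_{_Y}(s,t)u(s)\,\mathrm{d}s$ solves the BVP~(\ref{BVP-gene-bis}) for every continuous $u$. Since Theorem~\ref{th-BVP-gene-bis} guarantees uniqueness, this function coincides with $t\mapsto\int_0^1 G_{_{\!I}}(s,t)u(s)\,\mathrm{d}s$. Letting $u$ range over continuous functions and invoking the standard uniqueness of the integral kernel (e.g.\ by taking $u$ to be an approximate identity) yields $c_{_Y}=G_{_{\!I}}$, so $G_{_{\!I}}$ is indeed a covariance function.

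The main potential obstacle is administrative rather than conceptual: one must verify that the correspondence $J\leftrightarrow I$ from Section~\ref{sect-bridges} actually enumerates precisely the $2^n$ sets $I\subset\{0,1,\dots,2n-1\}$ satisfying $2n-1-I=\{0,1,\dots,2n-1\}\setminus I$. Such sets are characterized by the property that for each $i\in\{0,1,\dots,n-1\}$ exactly one of $i,\,2n-1-i$ lies in $I$, giving $2^n$ of them, matching the $2^n$ subsets $J$ of $\{1,2,\dots,n\}$; a small bookkeeping check shows that the maps $J\mapsto(n-J)\cup[\{n,\dots,2n-1\}\setminus(J+n-1)]$ and $I\mapsto(n-I)\cap\{1,\dots,n\}$ are mutually inverse bijections between these two families. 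With this bijection in place, the argument above closes the proof and, as a bonus, identifies explicitly the Gaussian bridge whose covariance is $G_{_{\!I}}$.
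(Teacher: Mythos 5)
Your proof of the stated equivalence is exactly the paper's argument: apply Proposition~\ref{sym} with $I_2=\{0,1,\dots,2n-1\}\setminus(2n-1-I)$ and then Proposition~\ref{different} to conclude that symmetry holds if and only if $I$ equals that complement. Your additional justification of the parenthetical claim that $G_{_{\!I}}$ is then a covariance function --- identifying it with $c_{_Y}$ for the bridge indexed by $J=(n-I)\cap\{1,\dots,n\}$ via Theorem~\ref{th-BVP-gene}, the uniqueness in Theorem~\ref{th-BVP-gene-bis}, and the bijection between admissible sets $I$ and subsets $J$ --- is correct and makes explicit what the paper leaves implicit from Section~\ref{sect-bridges}.
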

%
\begin{proof}
Set $I'=\{0,1,\dots,2n-1\}\backslash (2n-1-I)$. By Proposition~\ref{sym},
we see that $G_{_{\!I}}$ is symmetric if and only if
$G_{\raisebox{0ex}[1.45ex]{$\scriptscriptstyle{\!I'}$}}(s,t)=G_{_{\!I}}(s,t)$
for any $s,t\in[0,1]$, that is, by Proposition~\ref{different}, if and only if $I=I'$.
\qed
\end{proof}

We made several verifications with the aid of Maple.
Below is the program we wrote for this.

\begin{footnotesize}
\begin{verbatim}
[> Green_function:=proc(n,setI)    local V,M,S,T,P,setIcomp;
   V:=i->vector(n,[seq(binomial(k,i),k=n..2*n-1)]);
   M:=liste->stackmatrix(seq(V(i),i=liste));
   S:=(s,liste)->Matrix(1,n,[seq((1-s)^(2*n-1-i)/(i!*(2*n-1-i)!),i=liste)]);
   T:=t->Matrix(n,1,[seq([t^k],k=n..2*n-1)]);
   P:=(s,t,liste)->multiply(S(s,liste),inverse(transpose(M(liste))),T(t))[1,1];
   setIcomp:=[op({seq(i,i=0..2*n-1)} minus {seq(2*n-1-i,i=setI)})];
   print(`value of n`=n,`differentiating indices set I_1`=setI);
   print(`Green function for s<t: GI_1(s,t)`
         =sort(simplify((t-s)^(2*n-1)/((2*n-1)!)-P(s,t,setI)),[s,t],plex));
   print(`Green function for s>t: GI_1(s,t)`
         =sort(simplify(-P(t,s,setI)),[s,t],plex));
   print(`symmetry test: GI_1(s,t)=GI_1(t,s)?`
         =evalb(simplify((t-s)^(2*n-1)/((2*n-1)!)-P(s,t,setI)+P(t,s,setI))=0));
   print(`complementary set of 2n+1-I_1: I_2`=setIcomp);
   print(`difference between the two Green functions for s<t: GI_1(s,t)-GI_2(s,t)`
         =sort(simplify(-P(s,t,setI)+P(s,t,setIcomp)),[s,t],plex));
   print(`difference between the two Green functions for s>t: GI_1(s,t)-GI_2(s,t)`
         =sort(simplify(-P(s,t,setIcomp)+P(s,t,setI)),[s,t],plex));
   print(`equality test between the two Green functions for s<t: GI_1(s,t)=GI_2(t,s)?`
         =evalb(simplify((t-s)^(2*n-1)/((2*n-1)!)-P(s,t,setI)+P(t,s,setIcomp))=0));
   print(`equality test between the two Green functions for s>t: GI_1(s,t)=GI_2(t,s)?`
         =evalb(simplify((t-s)^(2*n-1)/((2*n-1)!)-P(s,t,setIcomp)+P(t,s,setI))=0));
   end proc;
\end{verbatim}
\end{footnotesize}
To obtain the two Green functions associated with the sets $I_1$ and
$I_2$, $G_{_{I_1}}$ and $G_{_{I_2}}$, together with the equality test between them,
run the command {\footnotesize \verb+[> Green_function(n,I_1);+}. For instance, the return
of the command {\footnotesize \verb+[> Green_function(5,[2,3,5,6,8]);+} is
\begin{footnotesize}
\begin{verbatim}
   value of n = 3, differentiating indices set I_1 = [1,4,5]
   Green function for s<t: GI_1(s,t) =
         -1/120 s^5 - 1/72 s^4 t^3 + 1/24 s^4 t + 1/18 s^3 t^3 - 1/12 s^3 t^2
   Green function for s>t: GI_1(s,t) =
         -1/120 s^5 + 1/24 s^4 t - 1/72 s^3 t^4 + 1/18 s^3 t^3 - 1/12 s^3 t^2
   symmetry test: GI_1(s,t)=GI_1(t,s)? = false
   complementary set of 2n+1-I_1: I_2 = [2,3,5]
   difference between the two Green functions for s<t:
         GI_1(s,t)-GI_2(s,t) = -1/72 s^4 t^4 + 1/72 s^3 t^4
   difference between the two Green functions for s>t:
         GI_1(s,t)-GI_2(s,t) = 1/72 s^4 t^3 - 1/72 s^3 t^4
   equality test between the two Green functions for s<t: GI_1(s,t)=GI_2(t,s)? = true
   equality test between the two Green functions for s>t: GI_1(s,t)=GI_2(t,s)? = true
\end{verbatim}
\end{footnotesize}

\subsection{Example: bridges of twice integrated Brownian motion ($n=3$)}\label{sect-IBM2}

Here, we have a look on the particular case where $n=3$ for which
the corresponding process $(X_n(t))_{t\in [0,1]}$ is the twice
integrated Brownian motion:
$$
X_3(t)=\int_0^t (t-s)B(s)\,\dd s=\int_0^t \left(\int_0^{s_2} B(s_1)\,\dd s_1\right)\dd s_2.
$$
All the associated conditioned processes that can be constructed are related to the equation
$v^{(6)}(t)=-u(t)$ with boundary value conditions at time~$0$:
$v(0)=v'(0)=v''(0)=0$. There are $2^3=8$ such processes.
Since the computations are tedious and the explicit results are cumbersome,
we only report the correspondance between bridges and boundary value conditions
at time~$1$ through the sets of indices $I$ and $J$. These are written in the table below.
$$
\begin{array}{|@{\hspace{.3em}}c@{\hspace{.3em}}|@{\hspace{.3em}}c@{\hspace{.3em}}|
@{\hspace{.3em}}c@{\hspace{.3em}}|@{\hspace{.3em}}c@{\hspace{.3em}}|
@{\hspace{.3em}}c@{\hspace{.3em}}|@{\hspace{.3em}}c@{\hspace{.3em}}|
@{\hspace{.3em}}c@{\hspace{.3em}}|@{\hspace{.3em}}c@{\hspace{.3em}}|@{\hspace{.3em}}c@{\hspace{.3em}}|}
\hline\vspace{-2ex}&&&&&&&&
\\
\mbox{conditioning set } J & \varnothing & \{1\} & \{2\} & \{3\} & \{1,2\} & \{1,3\} & \{2,3\} & \{1,2,3\}
\\
\vspace{-2ex}&&&&&&&&
\\
\hline\vspace{-2ex}&&&&&&&&
\\
\mbox{differentiating set } I & \{3,4,5\} & \{2,4,5\} & \{1,3,5\} & \{0,3,4\} & \{1,2,5\}
& \{0,2,4\} & \{0,1,3\} & \{0,1,2\}
\\[-2ex]
&&&&&&&&
\\
\hline
\end{array}
$$
The Green functions related to the other sets cannot be related to some Gaussian processes.
The sets are written in the table below with the correspondance $I_2=\{0,1,2,3,4,5\}\backslash (5-I_1)$.
$$
\begin{array}{|@{\hspace{.3em}}c@{\hspace{.3em}}|@{\hspace{.3em}}c@{\hspace{.3em}}|
@{\hspace{.3em}}c@{\hspace{.3em}}|@{\hspace{.3em}}c@{\hspace{.3em}}|
@{\hspace{.3em}}c@{\hspace{.3em}}|@{\hspace{.3em}}c@{\hspace{.3em}}|
@{\hspace{.3em}}c@{\hspace{.3em}}|}
\hline\vspace{-2ex}&&&&&&
\\
\mbox{differentiating set } I_1 & \{0,1,4\} & \{0,1,5\} & \{0,2,5\} & \{0,3,5\} & \{0,4,5\} & \{1,4,5\}
\\
\vspace{-2ex}&&&&&&
\\
\hline\vspace{-2ex}&&&&&&
\\
\mbox{differentiating set } I_2 & \{0,2,3\} & \{1,2,3\} & \{1,2,4\} & \{1,3,4\} & \{2,3,4\} & \{2,3,5\}
\\[-2ex]
&&&&&&
\\
\hline
\end{array}
$$
\subsection{Example: bridges of thrice integrated Brownian motion ($n=4$)}\label{sect-IBM3}

For $n=4$, only the $2^4=16$ following differentiating sets can be related to bridges:
\begin{align*}
\{0,1,2,3\},\,\{0,1,2,4\},\,\{0,1,3,5\},\,\{0,1,4,5\},\,\{0,2,3,6\},\,\{0,2,4,6\},\,\{0,3,5,6\},\,\{0,4,5,6\},
\\
\{1,2,3,7\},\,\{1,2,4,7\},\,\{1,3,5,7\},\,\{1,4,5,7\},\,\{2,3,6,7\},\,\{2,4,6,7\},\,\{3,5,6,7\},\,\{4,5,6,7\}.
\end{align*}

\section*{Appendix: Hermite interpolation polynomials}
\renewcommand{\thelemma}{A.\arabic{lemma}}
\renewcommand{\theremark}{A.\arabic{remark}}
\setcounter{equation}{0}\setcounter{remark}{0}
\renewcommand{\theequation}{A.\arabic{equation}}
%
\begin{lemma}\label{hermite}
Let $a_i$, $i\in\{0,1,\dots,n-1\}$, and $b_i$, $i\in I$, be real numbers.
There exists a unique polynomial $P$ such that
\begin{equation}\label{condition-hermite-ter}
\begin{cases}
P^{(2n)}=0,
\\[.5ex]
P^{(i)}(0)=a_i &\mbox{for } i\in\{0,1,\dots,n-1\},
\\[.5ex]
P^{(i)}(1)=b_i
&\mbox{for } i\in I.
\end{cases}
\end{equation}
\end{lemma}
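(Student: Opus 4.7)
The space $\mathcal{P}_{<2n}$ of polynomials of degree less than $2n$ has dimension $2n$, which equals the total number of interpolation conditions in~(\ref{condition-hermite-ter}). The system is linear, so I would first reduce to the homogeneous case $a_i=b_i=0$: existence for arbitrary data follows from uniqueness by the rank-nullity theorem. It remains to prove the following claim: if $P\in\mathcal{P}_{<2n}$ satisfies $P^{(i)}(0)=0$ for $i\in\{0,1,\dots,n-1\}$ and $P^{(i)}(1)=0$ for $i\in I$, then $P=0$.

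Let $I^c=\{0,1,\dots,2n-1\}\setminus I$, so that $|I^c|=n$. The first step is to parametrize $P$ by exploiting the Taylor expansion at $t=1$:
$$
P(t)=\sum_{i=0}^{2n-1} \frac{P^{(i)}(1)}{i!}(t-1)^i.
$$
The conditions at $t=1$ kill all coefficients with index $i\in I$, so $P$ lies in the $n$-dimensional subspace spanned by $\{(t-1)^i:i\in I^c\}$. Write $P(t)=\sum_{i\in I^c} c_i(t-1)^i$ and impose the remaining conditions $P^{(j)}(0)=0$ for $j\in\{0,1,\dots,n-1\}$. This yields the linear system
$$
\sum_{i\in I^c} (-1)^{i-j}\, i^{\underline{j}}\, c_i = 0,\qquad j\in\{0,1,\dots,n-1\},
$$
where $i^{\underline{j}}=i(i-1)\cdots(i-j+1)$ is the falling factorial (with the convention $i^{\underline{j}}=0$ for non-negative integers $i<j$, which makes the indicator $\mathbf{1}_{\{i\ge j\}}$ automatic).

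The crux of the argument is to show that the associated $n\times n$ matrix is nonsingular. After factoring the sign $(-1)^{-j}$ out of each row and $(-1)^i$ out of each column, this reduces to showing
$$
\det\bigl(i^{\underline{j}}\bigr)_{j\in\{0,1,\dots,n-1\},\, i\in I^c}\neq 0.
$$
This is the main point of the proof, but it is standard: for each fixed $j$, the map $i\mapsto i^{\underline{j}}$ is a polynomial of degree exactly $j$ in $i$ with leading coefficient $1$, so the family $\{i\mapsto i^{\underline{j}}:j=0,\dots,n-1\}$ is a triangular basis change away from the monomial family $\{i\mapsto i^j\}$. Consequently the determinant equals the usual Vandermonde determinant over the node set $I^c$, namely $\prod_{i<i',\,i,i'\in I^c}(i'-i)$, which is nonzero because the elements of $I^c$ are pairwise distinct. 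This forces all $c_i=0$ and hence $P=0$, completing the uniqueness proof.

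The main obstacle is recognizing the right parametrization: working with the Taylor expansion at $t=1$ (indexed by $I^c$) rather than with the monomial basis $\{t^j\}$ is what transforms the conditions at $t=0$ into a generalized Vandermonde system. Note that, unlike the energy-integral argument used in the proof of Theorem~\ref{th-BVP-gene}, this approach makes no symmetry assumption on $I$ (such as $2n-1-I=\{0,\dots,2n-1\}\setminus I$), which is essential since Lemma~\ref{hermite} must hold for every subset $I\subset\{0,\dots,2n-1\}$ of cardinality $n$.
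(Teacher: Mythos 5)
Your proof is correct, and it takes a genuinely different route from the paper's. The paper expands $P$ in the monomial basis at $0$, $P(t)=\sum_{j=0}^{2n-1}c_j\,t^j/j!$, which makes the conditions at $0$ trivial ($c_i=a_i$) and reduces everything to the invertibility of the $n\times n$ block $\bigl(1/(n+j-i)!\bigr)_{i\in I,\,0\le j\le n-1}$; it then establishes invertibility by an explicit sequence of column operations producing an $\mathbf{L}\mathbf{U}$-factorization, where the successive columns acquire the factors $(i-i_1)\cdots(i-i_k)$. You instead expand $P$ in the Taylor basis at $1$, which makes the conditions at $1$ trivial (only the indices in $I^c$ survive) and turns the conditions at $0$ into the system $\sum_{i\in I^c}(-1)^{i-j}\,i^{\underline{j}}\,c_i=0$; since $x^{\underline{j}}$ is monic of degree $j$, the matrix $(i^{\underline{j}})$ is a unipotent row transformation of the Vandermonde matrix on the distinct nodes $I^c$, so its determinant is $\prod_{i<i'}(i'-i)\neq 0$. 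The two arguments are dual (the paper diagonalizes the $t=0$ conditions, you diagonalize the $t=1$ conditions), and both correctly avoid any symmetry hypothesis on $I$. Your reduction to a generalized Vandermonde determinant is shorter and more conceptual; the paper's column-operation argument is more laborious but yields as a by-product an explicit factorization $\mathbf{A}_0=\mathbf{L}\mathbf{U}$ and hence an algorithm for computing $\mathbf{A}_0^{-1}$ and the interpolation polynomials themselves, which is of independent use elsewhere in the paper. One small point worth making explicit if you write this up: the identification of $\det(i^{\underline{j}})$ with the Vandermonde determinant uses that the change of basis from $\{x^k\}_{k<n}$ to $\{x^{\underline{j}}\}_{j<n}$ is unitriangular, so it preserves the determinant exactly, not merely its nonvanishing.
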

%
\begin{remark}
The conditions~(\ref{condition-hermite-ter}) characterize the Hermite
interpolation polynomial at points~$0$ and~$1$ with given values of the
successive derivatives at~$0$ up to order $n-1$ and given values of
the derivatives at~$1$ with \textsl{selected} orders in $I$.
When $I\neq \{0,1,\dots,n-1\}$, these polynomials differ from the usual Hermite interpolation
polynomials which involve the successive derivatives at certain points
\textsl{progressively} from zero order up to certain orders.
\end{remark}
%
\begin{proof}
We look for polynomials $P$ in the form
$P(t)=\sum_{j=0}^{2n-1} c_j\,\frac{t^j}{j!}.$
We have
$$
P^{(i)}(t)=\sum_{j=0}^{2n-1} c_j\,\frac{t^{j-i}}{(j-i)!}.
$$
We shall adopt the convention $1/[(j-i)!]=0$ for $i>j$.
The conditions~(\ref{condition-hermite-ter}) yield the linear system
(with the convention that $i$ and $j$ denote respectively the raw and
column indices)
$$
\begin{cases}
c_i=a_i &\mbox{if } i\in\{0,1,\dots,n-1\},
\\
\displaystyle\sum_{j=0}^{2n-1} \frac{c_j}{(j-i)!}=b_i &\mbox{if } i\in I.
\end{cases}
$$
The $(2n)\times(2n)$ matrix of this system writes
$$
\mathbf{A}=\begin{pmatrix}
(\delta_{i,j})_{\hspace{-.2em}i\in\{0,1,\dots,n-1\}\atop j\in\{0,1,\dots,2n-1\}}
\\[-.5ex]
\dotfill
\\
\begin{pmatrix}\displaystyle\frac{1}{(j-i)!}
\end{pmatrix}_{\!\!\hspace{-4em}i\in I\atop \!\!j\in\{0,1,\dots,2n-1\}}
\end{pmatrix}=\begin{pmatrix}
(\delta_{i,j})_{i\in\{0,1,\dots,n-1\}\atop j\in\{0,1,\dots,n-1\}}
&&
\;(0)_{\hspace{-1.4em}i\in\{0,1,\dots,n-1\}\atop j\in\{n,n+1,\dots,2n-1\}}
\\[-0.5ex]
\dotfill &&\hspace*{-.74em} \dotfill
\\
\begin{pmatrix}\displaystyle\frac{1}{(j-i)!}
\end{pmatrix}_{\!\!\hspace{-3.8em}i\in I\atop \!\!j\in\{0,1,\dots,n-1\}}
&&
\;\displaystyle\left(\frac{1}{(j-i)!}\right)_{\!\!\hspace{-5.1em}i\in I\atop \!\!j\in\{n,n+1,\dots,2n-1\}}
\end{pmatrix}\!.
\hspace{-13.23em}\begin{array}{c}\\[-2.86ex]\vdots\\[-1.5ex]\vdots\\[-1.5ex]\vdots\\[-2.5ex]\vdots\\[-1.5ex]\vdots\end{array}\hspace{13em}
$$
Proving the statement of Lemma~\ref{hermite} is equivalent to proving that
the matrix $\mathbf{A}$ is regular. In view of the form of $\mathbf{A}$
as a bloc-matrix, we see, since the north-west bloc is the unit matrix
and the north-east bloc is the null matrix,
that this is equivalent to proving that the south-east bloc of $\mathbf{A}$
is regular. Let us call this latter $\mathbf{A}_0$ and label its columns
$C_j^{(0)}$, $j\in\{0,1,\dots,n-1\}$:
$$
\mathbf{A}_0=\begin{pmatrix} \displaystyle\frac{1}{(n+j-i)!}
\end{pmatrix}_{\!\!\hspace{-3.7em}i\in I\atop \!\!j\in\{0,1,\dots,n-1\}}
=\begin{pmatrix} \mathbf{C}_0^{(0)} & \mathbf{C}_1^{(0)} & \cdots & \mathbf{C}_{n-1}^{(0)}
\end{pmatrix}\!.
$$
For proving that $\mathbf{A}_0$ is regular, we factorize $\mathbf{A}_0$
into the product of two regular triangular matrices.
The method consists in performing several transformations on the columns of
$\mathbf{A}_0$ which do not affect its rank. We provide in this way an algorithm
leading to a $\mathbf{L}\mathbf{U}$-factorization of $\mathbf{A}_0$
where $\mathbf{L}$ is a lower triangular matrix and $\mathbf{U}$ is an upper triangular matrix
with no vanishing diagonal term.

We begin by performing the transformation
$$
\mathbf{C}_j^{(1)}=\begin{cases}
\mathbf{C}_j^{(0)} &\mbox{if } j=0,
\\[1ex]
\displaystyle \mathbf{C}_j^{(0)}-\frac{\mathbf{C}_{j-1}^{(0)}}{n+j-i_1}
&\mbox{if } j\in\{1,2,\dots,n-1\}.
\end{cases}
$$
The generic term of the column $\mathbf{C}_j^{(1)}$, for $j\in\{1,2,\dots,n-1\}$, is
$$
\frac{1}{(n+j-i)!}-\frac{1}{n+j-i_1}\,\frac{1}{(n+j-i-1)!}
=\frac{i-i_1}{n+j-i_1}\,\frac{1}{(n+j-i)!}.
$$
This transformation supplies a matrix $\mathbf{A}_1$ with columns $\mathbf{C}_j^{(1)}$,
$j\in\{0,1,\dots,n-1\}$, which writes
$$
\mathbf{A}_1=\begin{pmatrix}
\mathbf{C}_0^{(1)} & \mathbf{C}_1^{(1)} & \cdots & \mathbf{C}_{n-1}^{(1)}
\end{pmatrix}
=\begin{pmatrix}\\[-1.8ex]\begin{pmatrix}
\displaystyle\frac{1}{(n-i)!}\end{pmatrix}_{\!\scriptscriptstyle i\in I}\;\;
&
\begin{pmatrix}
\displaystyle\frac{i-i_1}{n+j-i_1}\,\frac{1}{(n+j-i)!}\end{pmatrix}_{\!\!\hspace{-3.7em}i\in I\atop \!\!j\in\{1,2,\dots,n-1\}}
\end{pmatrix}\!.
\hspace{-18.1em}\begin{array}{c}\\[-3.5ex]\vdots\\[-1.5ex]\vdots\end{array}\hspace{14em}
$$
We have written
$$
\mathbf{A}_1=\mathbf{A}_0\mathbf{U_1}
$$
where $\mathbf{U}_1$ is the triangular matrix with a diagonal made of~$1$ below:
$$
\mathbf{U}_1=\left(\delta_{i,j}-\frac{\delta_{i,j-1}\ind_{\{j\ge 1\}}}{n+j-i_1}\right)
_{\!0\le i,j\le n-1}\!.
$$
We now perform the transformation
$$
\mathbf{C}_j^{(2)}=\begin{cases}
\mathbf{C}_j^{(1)} &\mbox{if } j\in\{0,1\},
\\[1ex]
\displaystyle \mathbf{C}_j^{(1)}-\frac{n+j-i_1-1}{n+j-i_1}
\,\frac{\mathbf{C}_{j-1}^{(1)}}{n+j-i_2}
&\mbox{if } j\in\{2,3,\dots,n-1\}.
\end{cases}
$$
The generic term of the column $\mathbf{C}_j^{(2)}$, for $j\in\{2,3,\dots,n-1\}$, is
\begin{align*}
\lqn{\frac{i-i_1}{n+j-i_1}\,\frac{1}{(n+j-i)!}
-\frac{i-i_1}{(n+j-i_1)(n+j-i_2)}\,\frac{1}{(n+j-i-1)!}}
&
=\frac{(i-i_1)(i-i_2)}{(n+j-i_1)(n+j-i_2)}\,\frac{1}{(n+j-i)!}.
\end{align*}
This transformation supplies a matrix $\mathbf{A}_2$ with columns $\mathbf{C}_j^{(2)}$,
$j\in\{0,1,\dots,n-1\}$, which writes
\begin{align*}
\mathbf{A}_2
&
=\begin{pmatrix}
\mathbf{C}_0^{(2)} & \mathbf{C}_1^{(2)} & \cdots & \mathbf{C}_{n-1}^{(2)}
\end{pmatrix}
=\left(
\vphantom{\begin{pmatrix}\displaystyle\frac{(i-i_1)(i-i_2)}{(n+j-i_1)(n+j-i_2)}\,\frac{1}{(n+j-i)!}
\end{pmatrix}_{\!\!\hspace{-3.7em}i\in I\atop \!\!j\in\{2,3,\dots,n-1\}}}
\begin{pmatrix}
\displaystyle\frac{1}{(n-i)!}\end{pmatrix}_{\!\scriptscriptstyle i\in I}\;\;
\begin{pmatrix}
\displaystyle\frac{i-i_1}{n+1-i_1}\,\frac{1}{(n+1-i)!}\end{pmatrix}_{\!\scriptscriptstyle i\in I}\;\;
\right.
\hspace{-13.9em}\begin{array}{c}\\[-3.5ex]\vdots\\[-1.5ex]\vdots\end{array}\hspace{32em}
\\
&
\hspace{17.5em}\left.\begin{pmatrix}
\displaystyle\frac{(i-i_1)(i-i_2)}{(n+j-i_1)(n+j-i_2)}\,\frac{1}{(n+j-i)!}
\end{pmatrix}_{\!\!\hspace{-3.7em}i\in I\atop \!\!j\in\{2,3,\dots,n-1\}}
\right)\!.
\hspace{-23.9em}\begin{array}{c}\\[-3.5ex]\vdots\\[-1.5ex]\vdots\end{array}
\end{align*}
We have written
$$
\mathbf{A}_2=\mathbf{A}_1\mathbf{U}_2=\mathbf{A}_0\mathbf{U}_1\mathbf{U}_2
$$
where $\mathbf{U}_2$ is the triangular matrix with a diagonal made of~$1$ below:
$$
\mathbf{U}_2=\left(\delta_{i,j}-\frac{n+j-i_1-1}{(n+j-i_1)(n+j-i_2)}
\,\delta_{i,j-1}\ind_{\{j\ge 2\}}\right)_{\!0\le i,j\le n-1}\!.
$$
In a recursive manner, we easily see that we can construct a sequence of matrices $\mathbf{A}_k,
\mathbf{U}_k$, $k\in\{1,2,\dots,n-1\}$, such that $\mathbf{A}_k=\mathbf{A}_{k-1}\mathbf{U}_k$
where $\mathbf{U}_k$ is the triangular matrix with a diagonal made of~$1$ below:
$$
\mathbf{U}_k=\left(\delta_{i,j}-\frac{(n+j-i_1-1)\dots(n+j-i_{k-1}-1)}{(n+j-i_1)
\dots(n+j-i_k)}\,\delta_{i,j-1}\ind_{\{j\ge k\}}\right)_{\!0\le i,j\le n-1}
$$
and
\begin{align*}
\lqn{\mathbf{A}_k
=\left(\begin{matrix}\\[-1.8ex]
\!\vphantom{\begin{pmatrix}\frac{1}{(n-i)!}\end{pmatrix}_{i\in I\atop j\in\{k+1,\dots,n-1\}}}
\begin{pmatrix}
\displaystyle\frac{1}{(n-i)!}\end{pmatrix}_{\!\scriptscriptstyle i\in I}\;\;
&
\begin{pmatrix}
\displaystyle\frac{i-i_1}{n+1-i_1}\,\frac{1}{(n+1-i)!}\end{pmatrix}_{\!\scriptscriptstyle i\in I}\;\;
&
\cdots
\end{matrix}\right.
\hspace{-15.4em}\begin{array}{c}\\[-3.5ex]\vdots\\[-1.5ex]\vdots\end{array}
\hspace{12.5em}\begin{array}{c}\\[-3.5ex]\vdots\\[-1.5ex]\vdots\end{array}\hspace{19.4em}
\hspace{-17.7em}\begin{array}{c}\\[-3.5ex]\vdots\\[-1.5ex]\vdots\end{array}
}
&
\hspace{-4.8em}
\begin{pmatrix}
\displaystyle\frac{(i-i_1)\dots(i-i_{k-1})}{(n+k-1-i_1)\dots(n+k-1-i_{k-1})}
\,\frac{1}{(n+k-1-i)!}\end{pmatrix}_{\!\scriptscriptstyle i\in I}\;\;
\hspace{-27.55em}\begin{array}{c}\\[-3.5ex]\vdots\\[-1.5ex]\vdots\end{array}
\\
&
\hspace{-4.8em}
\left.\begin{matrix}\\[-1.8ex]\begin{pmatrix}
\displaystyle\frac{(i-i_1)\dots(i-i_k)}{(n+j-i_1)\dots(n+j-i_k)}\,\frac{1}{(n+j-i)!}
\end{pmatrix}_{\hspace{-3.4em}i\in I\atop \!\!j\in\{k,\dots,n-1\}}
\end{matrix}\right)\!.
\hspace{-24.9em}\begin{array}{c}\\[-3.5ex]\vdots\\[-1.5ex]\vdots\end{array}
\end{align*}
We finally obtain, since all the $U_k$, $k\in\{1,2,\dots,n-1\}$, are regular, that
$$
\mathbf{A}_0=\mathbf{A}_{n-1}\mathbf{U}_{n-1}^{-1}\dots\mathbf{U}_1^{-1}
=\mathbf{L}\mathbf{U}
$$
with $\mathbf{U}=\mathbf{U}_{n-1}^{-1}\dots\mathbf{U}_1^{-1}$ and
\begin{align*}
\mathbf{L}=\mathbf{A}_{n-1}=
\left(\begin{pmatrix}
\displaystyle\frac{1}{(n-i)!}\end{pmatrix}_{\!\scriptscriptstyle i\in I}\;\;
\begin{pmatrix}
\displaystyle\frac{i-i_1}{n+1-i_1}\,\frac{1}{(n+1-i)!}\end{pmatrix}_{\!\scriptscriptstyle i\in I}\;\;
\,\cdots\right.
\hspace{-15.5em}\begin{array}{c}\\[-3.5ex]\vdots\\[-1.5ex]\vdots\end{array}
\hspace{12.48em}\begin{array}{c}\\[-3.5ex]\vdots\\[-1.5ex]\vdots\end{array}\hspace{19.6em}
\hspace{-17.9em}\begin{array}{c}\\[-3.5ex]\vdots\\[-1.5ex]\vdots\end{array}\hspace{19em}
\\
\left.\;\;\begin{pmatrix}
\displaystyle\frac{(i-i_1)\dots(i-i_{n-1})}{(2n-1-i_1)\dots(2n-1-i_{n-1})}
\,\frac{1}{(2n-1-i)!}\end{pmatrix}_{\!\scriptscriptstyle i\in I}
\right)\!.
\hspace{-24em}\begin{array}{c}\\[-3.5ex]\vdots\\[-1.5ex]\vdots\end{array}\hspace{28.1em}
\end{align*}
It is clear that the matrices $\mathbf{U}$ and $\mathbf{L}$ are triangular and regular,
and then $\mathbf{A}_0$ (and $\mathbf{A}$) is also regular. Moreover, the inverse
of $\mathbf{A}_0$ can be computed as
$$
\mathbf{A}_0^{-1}=\mathbf{U}^{-1}\mathbf{L}^{-1}
=\mathbf{U}_1\dots \mathbf{U}_{n-1}\mathbf{L}^{-1}.
$$
The proof of Lemma~\ref{hermite} is finished.
\qed
\end{proof}


\end{document}